\documentclass[twoside,reqno]{amsart}
%\CORPS DES COMPLEXES
\newcommand\CC{\hbox{C\kern -.58em {\raise .54ex \hbox{$\scriptscriptstyle |$}}
  \kern-.55em {\raise .53ex \hbox{$\scriptscriptstyle |$}} }}
%FIN DE DEMONSTRATION

%ENSEMBLE DES ENTIERS NATURELS
\newcommand\NN{\hbox{I\kern-.2em\hbox{N}}}
%CORPS DES REELS
\newcommand\RR{\hbox{I\kern-.2em\hbox{R}}}
\newcommand\sRR{{\sl \hbox{I\kern-.2em\hbox{R}}}}
%CORPS DES RATIONNELS
\newcommand\QQ{\hbox{I\kern-.53em\hbox{Q}}}
%ANNEAU DES ENTIERS

\newcommand\ZZ{{{\rm Z}\kern-.28em{\rm Z}}}

\theoremstyle{plain}
\newtheorem{lemma}{Lemma}[section]
\newtheorem{theorem}[lemma]{Theorem}
\newtheorem{corollary}[lemma]{Corollary}
\newtheorem{proposition}[lemma]{Proposition}
\newtheorem{definition}[lemma]{Definition}
\newtheorem*{proposition*}{Proposition}
\newtheorem*{definition*}{Definition}

\newtheorem{remark}[lemma]{Remark}

\usepackage{amsfonts}
\usepackage{amssymb}
\usepackage{enumerate}
\usepackage{amssymb}

\begin{document}
\title[The Cuntz semigroup, comparison and the ideal property]{The Cuntz semigroup, a Riesz type interpolation property, comparison
and the ideal property}
\author{Cornel Pasnicu}
\address{Department of Mathematics\\
The University of Texas at San Antonio\\
San Antonio, TX 78249, USA} \email{\bf Cornel.Pasnicu@utsa.edu}

\author{Francesc Perera}
\address{Departament de Matem\`atiques\\
Universitat Aut\`{o}noma de Barcelona\\
08193 Bellaterra (Barcelona), Spain}
\email{\bf perera@mat.uab.cat}

\subjclass[2000]{{ Primary 46L35; Secondary 46L05.}\\ \indent
 {\it Descriptive title:}
 The Cuntz semigroup, comparison and the ideal property.\\
 \indent
 {\it Key words and phrases.} $C^*$-algebra, the Cuntz semigroup, a Riesz type interpolation property, ideal property,
 comparison of positive elements, $AH$ algebra}

\maketitle

\begin{abstract}
We define a Riesz type interpolation property for the Cuntz
semigroup of a $C^*$-algebra and prove it is satisfied by the Cuntz semigroup of every $C^*$-algebra with the
ideal property. Related to this, we obtain two characterizations of
the ideal property in terms of the Cuntz semigroup of the
$C^*$-algebra. Some additional characterizations are proved in the
special case of the stable, purely infinite $C^*$-algebras, and two
of them are expressed in language of the Cuntz semigroup. We
introduce a notion of comparison of positive elements for every
unital $C^*$-algebra that has (normalized) quasitraces. We prove
that large classes of $C^*$-algebras (including large classes of
$AH$ algebras) with the ideal property have this comparison
property.
\end{abstract}

\section{Introduction}
\indent Elliott's classification program for separable, nuclear
$C^*$-algebras by discrete invariants including the $K$-theory is
one of the most important and successful research areas in Operator
Algebras (\cite{Ell:classprob}, \cite{EllToms:reg}; see also \cite{Ror:encyc}).
While it
is clear that not all the separable, nuclear $C^*$-algebras can be
classified, very large classes of $C^*$-algebras are known to be
classifiable (see, e.g., \cite{Ror:encyc}). Despite the success,
counterexamples to Elliott's conjecture in the simple case were
found by R{\o}rdam (\cite{Ror:simple}) and by Toms
(\cite{Toms:example} and \cite{Toms:classproblem}). In
\cite{Toms:classproblem} Toms used the Cuntz semigroup to
distinguish simple, nuclear $C^*$-algebras which cannot be
distinguished by the conventional Elliott invariant, where the Cuntz
semigroup of a $C^*$-algebra $A$ is a positively ordered, abelian
semigroup whose elements are equivalence classes of positive
elements in matrix algebras over $A$ (see Section 2 for details). It
then became clear that a further study of the Cuntz semigroup is
needed, and that, in fact, it is very important in Elliott's
classification program. On the other hand, as pointed out in
\cite{EllToms:reg}, the understanding of the regularity properties
of simple $C^*$-algebras, including the comparison of positive
elements, is essential in the new development of Elliott's program.
Extending the notion of comparison of positive elements to classes
of non-simple $C^*$-algebras---e.g., to the $C^*$-algebras with
{\it the ideal property}---and proving appropriate comparison
results for these classes is clearly a necessary and non-trivial
thing to do. In this paper we contribute to the study of the ideal
property, to the understanding of the structure of the Cuntz
semigroup of $C^*$-algebras (with this property), and prove a
certain type of comparison of positive elements for some classes of
$C^*$-algebras with the ideal property.\\
\indent A $C^*$-algebra is said to have {\it the ideal property} if
each of its ideals is generated (as an ideal) by its projections
({\it in this paper, by an ideal we mean a closed, two-sided ideal};
{\it the only exception here will be the Pedersen ideal, which is
the smallest dense algebraic two-sided ideal in the} $C^*${\it
-algebra}).
Note that every simple $C^*$-algebra with an approximate
unit of projections and every $C^*$-algebra of real rank zero
 (\cite{BroPed:realrank}) has the
 ideal property. The ideal
 property has been studied extensively by the first named author (alone or in collaboration),
 for example in \cite{Pas:ideal_prop_shape}, \cite{Pas:ideal_prop_AH},
 \cite{PasRor:tensIP}, \cite{PasRor:pirr0}
 and \cite{GonJiaLiPas:red1}, \cite{GonJiaLiPas:red2}.
 The ideal property is important in Elliott's classification program.
 It appeared first in Ken Stevens' Ph.D. thesis in which he classified by a  $K$-theoretical
 invariant a certain class of (non-simple) $AI$ algebras with the ideal property. In
 \cite{Pas:ideal_prop_shape} the first named author
 classified the $AH$ algebras with the ideal property and with slow dimension
 growth up to a shape equivalence and gave several characterizations of when an arbitrary $AH$
 algebra has the ideal property. Recall that a $C^*$-algebra $A$ is said to be an
 $AH$ algebra, if $A$ is the
 inductive limit $C^{*}$-algebra of: $$\begin{array}{l}
 A_{1}\stackrel{\phi_{1,2}}{\longrightarrow}A_{2}\stackrel{\phi_{2,3}}{\longrightarrow}A_{3}\stackrel{\phi_{3,4}}
 \longrightarrow\cdots\stackrel{\phi_{n - 1,n}}\longrightarrow A_{n}\stackrel{\phi_{n,n + 1}}\longrightarrow\cdots
 \end{array}$$ with
 $A_{n}=\bigoplus_{i=1}^{t_{n}}P_{n,i}M_{[n,i]}(C(X_{n,i}))P_{n,i}$,
 where the local spectra $X_{n,i}$ are finite, connected $CW$ complexes, $t_{n}$ and
 $[n,i]$ are strictly positive integers, and each $P_{n,i}$ is a projection of
 $M_{[n,i]}(C(X_{n,i}))$. In \cite{GonJiaLiPas:red1} and \cite{GonJiaLiPas:red2}, jointly with
 Gong, Jiang and Li, the first named author proved a
reduction theorem saying that
 every $AH$ algebra
 with the ideal property and with the dimensions of the local spectra uniformly
 bounded (i.e., with {\it no
 dimension growth}) can be written
 as an $AH$
 algebra with the ideal property with (special) local spectra of dimensions $\leq 3$. This result
 generalizes similar
 and strong reduction theorems for real rank zero $AH$ algebras---proved by Dadarlat
 (\cite{Dad:reduction}) and
 Gong (\cite{Gong:classII})---and also for
 simple $AH$ algebras---proved by Gong (\cite{Gong:simple_AH})---which have been major steps in
 the classification of the
 corresponding
 classes of $AH$ algebras. Also, in \cite{Pas:ideal_prop_shape} and \cite{Pas:ideal_prop_AH},
 the first named author proved several
nonstable $K$-theoretical results
 for a large class of $C^*$-algebras with the ideal property. Indeed, if $A$ is an $AH$ algebra with
 the ideal property
 and with slow dimension growth, it is proved in \cite{Pas:ideal_prop_shape} that $A$ has stable
 rank one (that means, in the unital case, that the set of the invertible
elements in $A$ is dense in $A$), that $K_0(A)$ is weakly
unperforated in the sense of Elliott and is also a Riesz group
 (\cite{Pas:ideal_prop_shape} and \cite{Pas:ideal_prop_AH}) and that the strict comparability
 of the projections in $A$ is determined by the
 tracial states of $A$, when $A$ is unital (\cite{Pas:ideal_prop_shape}). Also, jointly with
 R{\o}rdam, the first named author proved in \cite{PasRor:tensIP}
 that the ideal property is {\it not} preserved by taking minimal tensor products
 (even in the separable case). A
 characterization of the ideal property in the separable, purely
 infinite case is given by the first named author jointly with
 R{\o}rdam in \cite{PasRor:pirr0}, in terms of the Jacobson topology of the
 primitive spectrum of the $C^*$-algebra. The class of {\it purely infinite} $C^*$-algebras
 have been introduced by
Kirchberg and R{\o}rdam in \cite{KirRor:pi}, extending the
definition in the simple case given by Cuntz (\cite{Cuntz:KOn}). A
$C^*$-algebra $A$ is said to be purely infinite if $A$ has no
characters (or, equivalently, no non-zero abelian quotients) and if
for every $a, b \in A^+$ such that $a \in \overline{AbA}$
(the ideal of $A$ generated by $b$), it follows that there is a
sequence $\{x_n\}$ of elements in $A$ such that $a = \displaystyle
\lim_{n \rightarrow \infty} x_{n}^{*}bx_n$ (\cite{KirRor:pi}). The
study of purely infinite $C^*$-algebras was motivated by Kirchberg's
classification of the separable, nuclear $C^*$-algebras that
tensorially absorb the Cuntz algebra $\mathcal{O}_{\infty}$ up to
stable isomorphism by an ideal related $KK$-theory.\\
\indent The paper is divided into three sections. In Section 2 we remind the
reader some relevant definitions and notation---including the definition of
the Cuntz semigroup---
and define a Riesz type interpolation property for the Cuntz semigroup
$W(A)$ of a $C^*$-algebra $A$ (Definition \ref{dfn:2.1}). We prove that $W(A)$ has this property, whenever $A$ is a $C^*$-algebra the ideal property (Theorem \ref{thm:weakRiesz}).
Related to this, we obtain two characterizations of the ideal property in
terms of the Cuntz semigroup of the $C^*$-algebra (Theorem \ref{thm:2.5}). Some
additional characterizations are proved in the special case of the stable,
purely infinite $C^*$-algebras, and two of them are expressed in language
of the Cuntz semigroup of the algebra (Theorem \ref{thm:2.6}). In Section 3, after
reminding the reader some definitions, notation and results, we introduce
a notion of comparison of positive elements for every unital $C^*$-algebra
that has (normalized) quasitraces (Definition \ref{dfn:3.1}). We prove that large
classes of $C^*$-algebras (including large classes of $AH$ algebras)
with the ideal property have this comparison property (Theorems \ref{thm:3.5}, \ref{thm:3.6} and \ref{thm:3.7}).\\
\indent The symbol $\otimes$ will mean the minimal tensor product
of $C^*$-algebras.

\section{A Riesz type interpolation property for the Cuntz semigroup
and the ideal property}

%\indent We will define two Riesz type interpolation properties for
%the Cuntz semigroup of a $C^*$-algebra and will prove that they are
%satisfied in ``many" cases. Related to this, we will obtain two
%characterizations of the ideal property in terms of the Cuntz
%semigroup of the $C^*$-algebra. Some additional characterizations in
%language of the Cuntz semigroup will be proved in the special case
%of stable
%and purely infinite $C^*$-algebras.\\
\indent We start by recalling mainly some definitions and
notation (see also \cite{AraPerTom:survey}). If $A$ is a $C^*$-algebra and $a, b\in A^{+}$, then we
write $a \precsim b$ if there is a sequence $\{x_{n}\}$ of elements
of $A$ such that $a = \displaystyle \lim _{n \rightarrow \infty}
x_{n}bx_{n}^{*}$. This relation can be extended to the (local)
$C^*$-algebra $M_{\infty}(A)$ defined as the algebraic inductive
limit of $M_{n}(A)$ via the inclusion mappings $M_{n}(A)
\hookrightarrow M_{n + 1}(A)$ given by $x \mapsto x \oplus 0$. Let
$M_{\infty}(A)^{+}$ denote the set of positive elements of
$M_{\infty}(A)$. If $a, b \in M_{\infty}(A)^{+}$, we write $a
\precsim b$ provided that $a \precsim b$ in $M_{n}(A)$ for some $n$
such that $a, b \in M_{n}(A)$. (If we view $a$ and $b$ in two
different sized matrices over $A$, the above is equivalent to having
$a = \displaystyle \lim _{n \rightarrow \infty} x_{n}bx_{n}^{*}$
where the $x_{n}$ are suitable rectangular matrices.) If both $a
\precsim b$ and $b \precsim a$, we will write $a \sim b$ and will
call $a$ and $b$ Cuntz equivalent (see \cite{Cuntz:dimension}). We shall denote $W(A) =
M_{\infty}(A)^{+} / \sim$, and $\langle a \rangle \in W(A)$ will
denote the Cuntz equivalence class of an element $a$ of
$M_{\infty}(A)^{+}$ (so that $W(A) = \{ \langle a \rangle: a \in
M_{\infty}(A)^{+} \}$). Then $W(A)$ is a positively ordered abelian
semigroup when equipped with the relations:
\[
\langle a \rangle + \langle b \rangle = \langle a \oplus b \rangle,
\,\, \langle a \rangle \leq \langle b \rangle \Leftrightarrow a
\precsim b, \,\, a, b \in M_{\infty}(A)^{+}\,.
\]
\indent We shall refer to $W(A)$ as the {\it Cuntz semigroup of}
$A$. One shortcoming of this construction is that this semigroup fails to be
continuous with respect to sequential inductive limits. This was remedied in
\cite{CowEllIva:Cuntz} by constructing an ordered semigroup, termed $Cu(A)$, in
terms of countably generated Hilbert modules. This new object turned out to be
intimately related to $W(A)$, in that $Cu(A)$ is order isomorphic to
$W(A\otimes\mathcal K)$, where $\mathcal{K}$
is the $C^*$-algebra of the compact operators on $\ell^{2}(\mathbb{N})$
(\cite{CowEllIva:Cuntz}). The semigroup $Cu(A)$ is, as opposed to
$W(A)$, closed under (order-theoretic) suprema of increasing sequences and, in
significant cases, can be regarded as a completion of $W(A)$ (see
\cite{CowEllIva:Cuntz}, \cite{AntBosaPer:compl}). Moreover, every element in $Cu(A)$ is a supremum of a \emph{rapidly increasing} sequence.
More precisely, we write $x\ll y$ to mean that whenever $y\leq \sup y_n$ for an increasing sequence $\{y_n\}$, there is $m$ such that $x\leq y_m$. A sequence $\{x_n\}$ is rapidly increasing
if $x_n\ll x_{n+1}$ for all $n$. It is shown in \cite{CowEllIva:Cuntz} that, for any positive $a$, the sequence $\{\langle (a-1/n)_+\rangle\}$ is rapidly increasing (and with supremum $\langle a\rangle$).\\
% For the definition of $Cu(A)$ and related issues see
% \cite{CowEllIva:Cuntz} (and also \cite{AraPerTom:survey}). Note that
% $Cu(A) = W(A \otimes \mathcal{K})$ (see \cite{CowEllIva:Cuntz},
% \cite{AraPerTom:survey}), where $\mathcal{K}$
% is the $C^*$-algebra of the compact operators on $\ell^{2}(\mathbb{N})$.\\
\indent If $A$ is a (local) $C^*$-algebra, then denote the set of projections
in $A$ by $\mathcal{P}(A) := \{p \in
A: p = p^{2} = p^{*}\}$.\\
% and $\mathcal{P}(M_{\infty}(A)) := \{p \in
% M_{\infty}(A): p = p^{2} = p^{*}\}$.\\
\indent For each $a \in A^{+}$ and for each $\varepsilon > 0$, we
shall write $(a - \varepsilon)_{+}$ for the positive element of $A$
given by $h_{\varepsilon}(a)$, where $h_{\varepsilon}(t)$ = max$\{t
-\varepsilon, 0\}$.
\begin{definition}
\label{dfn:2.1}
Let $A$ be a $C^*$-algebra. We say
that the Cuntz semigroup $W(A)$ has the \emph{weak Riesz
interpolation by projections property} if for all $a_{i}, b_{i} \in
M_{\infty}(A)^{+}$ such that $\langle a_{i}\rangle \leq \langle
b_{j}\rangle$ (in $W(A)$), $1 \leq i,j \leq 2$ and for every $\varepsilon > 0$,
there exist a projection $p \in \mathcal P (M_{\infty}(A))$ and
$m \in \NN$ such that $\langle (a_{i} - \varepsilon)_{+}
\rangle \leq \langle p\rangle \leq m\langle b_{j}\rangle$, $1 \leq
i,j \leq 2$ (in $W(A)$).
\end{definition}
This property just defined is of course related to the property of Riesz interpolation by general positive elements. This can be achieved in the real rank zero setting
(\cite{per:ijm}), as well as for simple stable algebras that absorb the Jiang-Su algebra (\cite{Tiku:ijm}).

We want to prove the following result:
\begin{theorem}\label{thm:weakRiesz}
Let $A$ be a $C^*$-algebra with the ideal property. Then $W(A)$
has the weak Riesz interpolation by projections property.
\end{theorem}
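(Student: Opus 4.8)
The plan is to reduce the two-variable statement to a single-element lemma and then assemble. Set $c := a_1 \oplus a_2 \in M_\infty(A)^+$, so that $(c-\varepsilon)_+ = (a_1-\varepsilon)_+\oplus(a_2-\varepsilon)_+$ and $\langle a_i\rangle \le \langle c\rangle = \langle a_1\rangle + \langle a_2\rangle$ for $i=1,2$. The hypothesis $\langle a_i\rangle \le \langle b_j\rangle$ then forces $\langle c\rangle \le 2\langle b_j\rangle$ for each $j$. Thus, once I produce a single projection $p\in\mathcal P(M_\infty(A))$ and $n\in\NN$ with $\langle (c-\varepsilon)_+\rangle \le \langle p\rangle \le n\langle c\rangle$, I get $\langle(a_i-\varepsilon)_+\rangle \le \langle(c-\varepsilon)_+\rangle \le \langle p\rangle$ and $\langle p\rangle \le n\langle c\rangle \le 2n\langle b_j\rangle$, so the theorem follows with $m=2n$.

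The core is therefore the following \emph{key Lemma}: if $A$ has the ideal property, then for every $c\in M_\infty(A)^+$ and $\varepsilon>0$ there are a projection $p$ in the ideal $I$ of $M_\infty(A)$ generated by $c$ and $n\in\NN$ with $(c-\varepsilon)_+ \precsim p$ and $\langle p\rangle \le n\langle c\rangle$. First I would record that $M_\infty(A)$ inherits the property that every ideal is generated by its projections: ideals of $M_\infty(A)$ have the form $M_\infty(I_0)$ with $I_0\triangleleft A$, and the diagonal projections $e_{11}\otimes q$, $q\in\mathcal P(I_0)$, generate $M_\infty(I_0)$ once the $q$ generate $I_0$. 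Hence $I$ is generated by its projections. Writing $d:=(c-\varepsilon/2)_+ \in I$, I would approximate $d^{1/2}\in I$ by a finite sum $z=\sum_{l=1}^r x_l q_l y_l$ with $q_l\in\mathcal P(I)$, so closely that the positive element $zz^*$ satisfies $\|d-zz^*\|<\varepsilon/2$ (passing through $d^{1/2}$ is what keeps the approximant positive after forming $zz^*$).

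Set $p:=q_1\oplus\cdots\oplus q_r$, a projection in $M_\infty(I)$, and write $z=XpY$ with $X=(x_1,\dots,x_r)$, $Y=(y_1,\dots,y_r)^{t}$. Rørdam's lemma ($\|d-e\|<\delta$ with $e\ge0$ gives $(d-\delta)_+\precsim e$) yields $(c-\varepsilon)_+ = (d-\varepsilon/2)_+ \precsim zz^*$, and the standard chain $zz^* = Xp(YY^*)pX^* \precsim XpX^* = (Xp)(Xp)^*\sim (Xp)^*(Xp)=pX^*Xp\precsim p$ gives $(c-\varepsilon)_+ \precsim p$. For the upper bound, since $p$ lies in the ideal generated by $c$, I would approximate $p$ by $w=\sum_{k=1}^n u_k c v_k = U\,c^{(n)}\,V$ (where $c^{(n)}=c\oplus\cdots\oplus c$) so closely that $\|p-ww^*\|<1/2$; then Rørdam's lemma gives $p\sim (p-1/2)_+\precsim ww^*$, and the same type of chain $ww^*=Uc^{(n)}(VV^*)c^{(n)}U^*\precsim U(c^{(n)})^2U^*=(Uc^{(n)})(Uc^{(n)})^*\sim (Uc^{(n)})^*(Uc^{(n)})\precsim c^{(n)}$ gives $\langle p\rangle\le n\langle c\rangle$. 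This proves the Lemma, and with the reduction above, the theorem.

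The main obstacle is the Lemma: extracting from the ideal property---an ideal-theoretic statement about generation by projections---a single projection that simultaneously Cuntz-dominates $(c-\varepsilon)_+$ and sits inside the ideal generated by $c$, so that it is in turn dominated by a multiple of $c$, hence of each $b_j$. The technical care lies in handling the non-self-adjoint approximants $\sum x_lq_ly_l$: one cannot compare $(c-\varepsilon)_+$ with them directly, so I approximate $d^{1/2}$ and pass to $zz^*\ge0$ before invoking Rørdam's lemma, after which the Cuntz identities $ww^*\sim w^*w$ and $whw^*\precsim ww^*$ (for $0\le h$ bounded) carry the argument through.
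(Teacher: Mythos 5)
Your proof is correct, and while it shares the paper's overall architecture---use the ideal property to sandwich a projection between $\langle (c-\varepsilon)_+\rangle$ and a multiple of $\langle c\rangle$ for a single element $c$ assembled from $a_1,a_2$---it differs genuinely in both the reduction and the key lemma. The paper takes $c=a_1+a_2$ (the sum, after reducing to $a_i,b_i\in A^+$), which forces it to invoke the $\varepsilon$--$\delta$ continuity result of \cite{Ror:uhfII} to obtain $\langle (a_i-\varepsilon)_+\rangle\leq\langle (c-\delta)_+\rangle$, and it places $c$ in the intersection ideal $I=\overline{Ab_1A}\cap\overline{Ab_2A}$, so that the projection produced by Lemma \ref{lem:2.4}(i) is a direct sum of projections of $I$, each then dominated by a multiple of $\langle b_j\rangle$ via Lemma \ref{lem:2.4}(ii), i.e.\ \cite[Proposition 2.7(v)]{KirRor:pi}. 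Your choice $c=a_1\oplus a_2$ makes $(c-\varepsilon)_+=(a_1-\varepsilon)_+\oplus(a_2-\varepsilon)_+$, so no $\varepsilon$--$\delta$ step is needed, and the estimate $\langle c\rangle=\langle a_1\rangle+\langle a_2\rangle\leq 2\langle b_j\rangle$ replaces the intersection-ideal device at the harmless cost of a factor $2$ in $m$. Your key lemma is exactly condition (iii) of Theorem \ref{thm:2.5} (equivalently, Lemma \ref{lem:2.4}(i) and (ii) applied with $b=a=c$), which you reprove from scratch: where the paper gets an \emph{exact} finite representation $(a-\varepsilon)_+=\sum_i x_ip_iy_i$ from the Pedersen ideal and then symmetrizes via $x_ip_iy_i+y_i^*p_ix_i^*\leq x_ip_ix_i^*+y_i^*p_iy_i$, you approximate $d^{1/2}$ and pass to $zz^*$ before invoking R{\o}rdam's lemma---your positivity trick is precisely what makes the approximation route sound---and your by-hand chain $ww^*\precsim (Uc^{(n)})(Uc^{(n)})^*\sim c^{(n)}U^*Uc^{(n)}\precsim c^{(n)}$ is the standard argument underlying the cited \cite[Proposition 2.7(v)]{KirRor:pi}. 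The one informal point is your treatment of ideals of the local $C^*$-algebra $M_\infty(A)$; it is harmless but worth making explicit, e.g.\ by noting $c\in M_k(A)^+$ for some $k$ and that $M_k(A)$ inherits the ideal property because its closed ideals are exactly $M_k(I_0)$ with $I_0$ an ideal of $A$ (this is also how the paper implicitly reduces to $A^+$). Net effect: the paper's proof is shorter because it cites \cite{Ror:uhfII} and \cite{KirRor:pi}; yours is more self-contained and slightly more elementary, and it isolates cleanly that the single-element statement (iii) of Theorem \ref{thm:2.5} already implies the two-variable interpolation property directly.
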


The proof of the above theorem will use the following, of which part (ii) is known and essentially contained in \cite{KirRor:pi}; we include it here
just for convenience.
\begin{lemma}
\label{lem:2.4}
Let $A$ be a $C^*$-algebra, let $I$ be an ideal of $A$ that is
generated (as an ideal) by $\mathcal{P}(I)$ and let $a \in A^{+}$.
\begin{enumerate}[{\rm (i)}]
\item If $a \in I$, then for every $\varepsilon >
0,$, there exists $p \in \mathcal{P}(M_{\infty}(A))$ such that
$(a - \varepsilon)_{+} \precsim p$, where $p$ is a finite direct sum
of projections of $I$.
\item For every $q \in \mathcal{P}(\overline{AaA})$, there exists $n
\in \mathbb{N}$ such that $q \precsim a \otimes 1_{n}$.
\end{enumerate}
\end{lemma}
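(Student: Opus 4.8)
The plan is to derive both parts from the standard characterization of ideal membership through Cuntz comparison contained in \cite{KirRor:pi}: for positive elements $x, y$ of a $C^*$-algebra $B$, one has $x \in \overline{ByB}$ if and only if for every $\eta > 0$ there is $n \in \NN$ with $(x - \eta)_+ \precsim y \otimes 1_n$. Part (ii) is then immediate. Given a projection $q \in \mathcal{P}(\overline{AaA})$, fix $\eta \in (0,1)$ and apply the characterization in $B = A$ to $x = q$ and $y = a$ to obtain $n$ with $(q - \eta)_+ \precsim a \otimes 1_n$; since $(q-\eta)_+ = (1-\eta)q \sim q$, this gives $q \precsim a \otimes 1_n$, as wanted.

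For part (i) I would first localize $a$ inside a finitely generated subideal. Since $I$ is generated as an ideal by $\mathcal{P}(I)$, the union $\bigcup_F J_F$ --- where $F$ ranges over the finite subsets of $\mathcal{P}(I)$ and $J_F$ denotes the closed ideal of $A$ generated by $F$ --- is dense in $I$. As $a \in I$ forces $(a - \varepsilon/4)_+ \in I$ (because $0 \le (a-\varepsilon/4)_+ \le a$ and $I$ is hereditary), I can choose a finite set $F = \{p_1, \dots, p_N\} \subseteq \mathcal{P}(I)$ and, after replacing a nearby element of $J_F$ by the positive part of its self-adjoint part, a positive element $b \in J_F$ with $\|(a-\varepsilon/4)_+ - b\| < \varepsilon/4$. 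R{\o}rdam's lemma then yields $(a - \varepsilon/2)_+ \precsim b$, and the standard refinement of Cuntz subequivalence (namely $x \precsim y$ implies $(x-\varepsilon')_+ \precsim (y-\delta)_+$ for some $\delta>0$), applied to $x=(a-\varepsilon/2)_+$ and $\varepsilon'=\varepsilon/2$, produces $\delta > 0$ with $(a - \varepsilon)_+ \precsim (b-\delta)_+$.

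The second step passes from the several projections $p_1, \dots, p_N$ to a single one in a matrix algebra. Setting $P = p_1 \oplus \dots \oplus p_N \in \mathcal{P}(M_N(A))$, the closed ideal of $M_N(A)$ generated by $P$ is exactly $M_N(J_F)$, so $b$, viewed in $M_N(A)$, lies in $\overline{M_N(A)\,P\,M_N(A)}$. Applying the characterization above in $B = M_N(A)$ to $x = b$, $y = P$ with the threshold $\delta$ produces $n \in \NN$ with $(b-\delta)_+ \precsim P \otimes 1_n$. Writing $p := P \otimes 1_n$, which is a projection in $M_{\infty}(A)$ equal to the direct sum of $n$ copies of each $p_k$ --- hence a finite direct sum of projections of $I$ --- and concatenating the comparisons $(a-\varepsilon)_+ \precsim (b-\delta)_+ \precsim p$ finishes the proof.

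I expect the main obstacle to be essentially bookkeeping: arranging the successive $\varepsilon$-losses so that a single projection $p$ (rather than $(a - \varepsilon - \cdots)_+$) dominates $(a-\varepsilon)_+$, and verifying the two approximation facts --- that $\bigcup_F J_F$ is dense in $I$ and that the ideal of $M_N(A)$ generated by $P$ is $M_N(J_F)$ --- which are routine but must be stated with care. No genuinely new idea beyond the \cite{KirRor:pi} characterization appears to be needed.
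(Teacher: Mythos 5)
Your proof is correct. Part (ii) coincides with the paper's argument: both invoke \cite[Proposition 2.7(v)]{KirRor:pi} for $q\in\mathcal{P}(\overline{AaA})$ and use $(q-\eta)_+=(1-\eta)q\sim q$. For part (i), however, you take a genuinely different route. The paper avoids approximation altogether: since the algebraic ideal generated by $\mathcal{P}(I)$ is dense in $I$, it contains the Pedersen ideal of $I$, and $(a-\varepsilon)_+$ lies in that Pedersen ideal, so there is an \emph{exact} finite decomposition $(a-\varepsilon)_+=\sum_{i=1}^k x_ip_iy_i$ with $p_i\in\mathcal{P}(I)$; taking adjoints and averaging, the inequality $xpy+y^*px^*\leq xpx^*+y^*py$ gives $(a-\varepsilon)_+\leq\frac{1}{2}\sum_{i=1}^k(x_ip_ix_i^*+y_i^*p_iy_i)$, and \cite[Lemma 2.5(ii) and Lemma 2.8(ii)]{KirRor:pi} (each $zpz^*\precsim p$, and a sum is Cuntz-dominated by the direct sum of its terms) conclude immediately, with no $\varepsilon$-bookkeeping and no matrix-ideal computation. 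Your argument replaces this Pedersen-ideal input by soft approximation: density of $\bigcup_F J_F$, R{\o}rdam's lemma, the refinement $(x-\varepsilon')_+\precsim(y-\delta)_+$ from \cite{Ror:uhfII}, and \cite[Proposition 2.7(v)]{KirRor:pi} applied in $M_N(A)$ to $P=p_1\oplus\cdots\oplus p_N$; the three steps you flag as needing care (density of $\bigcup_F J_F$, positivity of the approximant, and the identification of the ideal of $M_N(A)$ generated by $P$ with $M_N(J_F)$) all check out. The trade-off: the paper's proof is shorter and purely algebraic but rests on the (standard, if less elementary) fact that $(a-\varepsilon)_+$ belongs to the minimal dense algebraic ideal, hence to the dense algebraic ideal generated by $\mathcal{P}(I)$; yours uses only the $\precsim$-calculus and the ideal-membership characterization, at the cost of quantifier bookkeeping. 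One simplification available in your scheme: once $(a-\varepsilon/2)_+\precsim b\in J_F$, you already have $(a-\varepsilon/2)_+\in J_F$, so you may apply the \cite{KirRor:pi} characterization directly to $(a-\varepsilon/2)_+$ with cutoff $\varepsilon/2$ in $M_N(A)$, making the $\delta$-refinement step unnecessary.
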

\begin{proof}
(i). Since $I$ is generated by its projections and the element $(a-\varepsilon)_+$ belongs to the Pedersen ideal of $I$, we can write
$(a-\varepsilon)_+=\sum_{i=1}^k x_ip_iy_i$, where $x_i$, $y_i\in A$ and $p_i$ are projections from
$I$.  Next,
\[
(a - \varepsilon)_{+} = \sum_{i=1}^k x_ip_iy_i = \sum_{i=1}^k y_i^*p_ix_i^* = \frac{1} {2} \sum_{i=1}^k (x_ip_iy_i  +  y_i^*p_ix_i^*) \leq \frac{1} {2} \sum_{i=1}^k (x_ip_ix_i^*  +  y_i^*p_iy_i)\,,
\]
from which the conclusion follows using  \cite[Lemma 2.5(ii) and
Lemma 2.8(ii)] {KirRor:pi}.

(ii). Fix some $0 <\varepsilon < 1$. Since $q \in
\mathcal{P}(\overline{AaA}),$ using \cite[Proposition
2.7(v)]{KirRor:pi}, we get for this $\varepsilon > 0$ that there exists some $n
\in \mathbb{N}$ such
that:
\[
q \sim (q - \varepsilon)_{+} \precsim a \otimes
1_{n}\,,
\]
as wanted.
\end{proof}

\indent{\it Proof of Theorem \ref{thm:weakRiesz}}. Let $a_{i}, b_{i} \in
M_{\infty}(A)^{+}$ such that $\langle a_{i}\rangle \leq \langle
b_{j}\rangle$, $1 \leq i,j \leq 2$. We may suppose that $a_{i},
b_{i} \in A^{+}, 1 \leq i \leq 2$. Let $\varepsilon > 0$. Note that
\[
\langle a_i\rangle\leq \langle a_1+a_2\rangle
\]
for $i=1,2$.
%
%Then, by
%Proposition \ref{prop:2.3} there exists $c\in A^+$ (which may be taken as
%$(a_{1} + a_{2} -\delta_{1})_{+}$, for some $0 < \delta_{1} \leq \varepsilon)$
%and there exists $k \in \mathbb{N}$ such that:
%\begin{equation}
%\langle (a_{i} - \frac{\varepsilon}{2})_{+}\rangle \leq \langle c
%\rangle \leq k\langle b_{j}\rangle, 1 \leq i, j \leq 2\label{2.4}
%\end{equation}
Then, by \cite{Ror:uhfII}, for our
$\varepsilon> 0$, there exists $\delta > 0$ such that:
\begin{equation}
\langle (a_{i} - \varepsilon)_{+}\rangle \leq
\langle (c - \delta)_{+}\rangle, 1 \leq i \leq 2\,,\label{2.5}
\end{equation}
where $c=a_1+a_2$. Since $\langle a_i\rangle \leq \langle b_{j}\rangle, 1
\leq i, j \leq 2$, we have that $c \in \overline{Ab_{j}A}, 1
\leq j \leq 2$, i.e. $c \in I := \overline{Ab_{1}A} \cap
\overline{Ab_{2}A}$. Note that since $A$ has the ideal property and
$I$ is an ideal of $A$, it follows that $I$ is generated (as an
ideal) by $\mathcal P(I)$. Then, by Lemma \ref{lem:2.4}, it follows that
for our $\delta > 0$ there exists $p \in \mathcal{P}(M_{\infty}(A))$
such that $p$ is a finite direct sum of projections of $I$ and there
exists $m \in \mathbb{N}$ such that:
\begin{equation}
\langle (c - \delta)_{+}\rangle \leq \langle p \rangle \leq m\langle
b_{j} \rangle, 1 \leq j \leq 2\label{2.6}
\end{equation}
\indent Finally, \eqref{2.5} and \eqref{2.6} imply that:
\[
\langle (a_{i} - \varepsilon)_{+}\rangle \leq \langle p \rangle \leq
m\langle b_{j} \rangle, 1 \leq i, j \leq 2\,,
\]
which ends the proof.\qed\\

\indent We now prove a theorem inspired by the proof of Theorem
\ref{thm:weakRiesz} and that gives two characterizations of the ideal property
in terms of the Cuntz semigroup of the
$C^*$-algebra. In particular, it implies Theorem \ref{thm:weakRiesz}.
\begin{theorem}
\label{thm:2.5} Let $A$ be a $C^*$-algebra. Then, the following are
equivalent:
\begin{enumerate}[{\rm (i)}]
\item $A$ has the ideal property;
\item For all $a_{i}, b_{i} \in A^{+}$ such that
$\langle a_{i}\rangle \leq \langle b_{j}\rangle$, $1 \leq i,j \leq
2$ and for every $\varepsilon > 0$, there exist a projection $p \in
\mathcal P (M_{\infty}(A))$ and $m \in \mathbb{N}$ such that
$\langle
(a_{i} - \varepsilon)_{+} \rangle \leq \langle p\rangle \leq
m\langle b_{j}\rangle$,
$1 \leq i,j \leq 2$ and $p$ is a finite direct sum of projections of
$A$;
\item For every $a \in A^{+}$ and every $\varepsilon > 0$, there are a
projection $p \in \mathcal P (M_{\infty}(A))$ and $m \in
\mathbb{N}$ such that $\langle (a - \varepsilon)_{+} \rangle \leq
\langle p\rangle \leq m\langle a\rangle$ and $p$ is
a finite direct sum of
projections of $A$.
\end{enumerate}
\end{theorem}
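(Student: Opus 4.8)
The plan is to prove the cycle $\mathrm{(i)}\Rightarrow\mathrm{(ii)}\Rightarrow\mathrm{(iii)}\Rightarrow\mathrm{(i)}$, in which the first two implications are short and essentially formal and the substance lies in $\mathrm{(iii)}\Rightarrow\mathrm{(i)}$.

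For $\mathrm{(i)}\Rightarrow\mathrm{(ii)}$ I would simply reproduce the argument already given in the proof of Theorem \ref{thm:weakRiesz}: setting $c=a_1+a_2$ and $I=\overline{Ab_1A}\cap\overline{Ab_2A}$, the ideal property makes $I$ generated by $\mathcal{P}(I)$, and Lemma \ref{lem:2.4} then produces the required $p$ and $m$. The only addition is to note that the projection $p$ obtained there is a finite direct sum of projections of $I$, and since $I\subseteq A$ these are in particular projections of $A$, which is exactly the extra conclusion demanded by $\mathrm{(ii)}$. For $\mathrm{(ii)}\Rightarrow\mathrm{(iii)}$ I would specialize to $a_1=a_2=b_1=b_2=a$: the hypotheses $\langle a_i\rangle\leq\langle b_j\rangle$ then hold trivially, and $\mathrm{(ii)}$ delivers precisely the projection described in $\mathrm{(iii)}$.

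The heart of the matter is $\mathrm{(iii)}\Rightarrow\mathrm{(i)}$. Let $I$ be an ideal of $A$ and let $J$ be the closed ideal of $A$ generated by $\mathcal{P}(I)$; then $J\subseteq I$, and it suffices to show that every $a\in I^{+}$ lies in $J$. Fixing $\varepsilon>0$ and applying $\mathrm{(iii)}$ to $a$, I obtain $m\in\mathbb{N}$ and a projection $p=p_1\oplus\cdots\oplus p_k$, with each $p_i\in\mathcal{P}(A)$, satisfying $\langle(a-\varepsilon)_+\rangle\leq\langle p\rangle\leq m\langle a\rangle$. The key step is to exploit the upper bound $\langle p\rangle\leq m\langle a\rangle$, that is $p\precsim a\otimes 1_m$: since $a\in I$ we have $a\otimes 1_m\in M_m(I)$, and writing $p=\lim x_n(a\otimes 1_m)x_n^{*}$ with each $x_n(a\otimes 1_m)x_n^{*}$ in the closed ideal $M_N(I)$ of $M_N(A)$ forces $p\in M_N(I)$. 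As $p$ is block-diagonal with blocks $p_i\in A$, each diagonal entry $p_i$ then lies in $I$, so $p_i\in\mathcal{P}(I)\subseteq J$ and hence $p\in M_N(J)$.

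With $p\in M_N(J)$ in hand, the lower bound $\langle(a-\varepsilon)_+\rangle\leq\langle p\rangle$, i.e.\ $(a-\varepsilon)_+\precsim p$, feeds the same closed-ideal argument: writing $(a-\varepsilon)_+=\lim x_n p x_n^{*}$, each $x_n p x_n^{*}$ lies in the closed ideal $J$ (since $p\in M_N(J)$ and $J$ is an ideal of $A$), so $(a-\varepsilon)_+\in J$. Letting $\varepsilon\to 0$ and using that $J$ is closed gives $a\in J$, whence $I^{+}\subseteq J$ and therefore $I=J$ is generated by its projections; as $I$ was arbitrary, $A$ has the ideal property. I expect the main obstacle to be precisely the key step above: one must extract, purely from the order-theoretic comparison $\langle p\rangle\leq m\langle a\rangle$, both that $p$ sits in $M_\infty(I)$ and that its individual summands are genuine projections \emph{of} $I$ rather than merely of $A$, and it is this passage that links the Cuntz-semigroup hypothesis of $\mathrm{(iii)}$ to the algebraic content of the ideal property.
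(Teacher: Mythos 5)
Your proposal is correct and takes essentially the same route as the paper: the same cycle $\mathrm{(i)}\Rightarrow\mathrm{(ii)}\Rightarrow\mathrm{(iii)}\Rightarrow\mathrm{(i)}$, with $\mathrm{(i)}\Rightarrow\mathrm{(ii)}$ read off from the proof of Theorem \ref{thm:weakRiesz}, $\mathrm{(ii)}\Rightarrow\mathrm{(iii)}$ by specializing $a_i=b_i=a$, and $\mathrm{(iii)}\Rightarrow\mathrm{(i)}$ by extracting ideal membership from the two Cuntz inequalities. Your closed-ideal limit argument (that $p\precsim a\otimes 1_m$ forces $p\in M_N(I)$ and hence each summand $p_i\in\mathcal{P}(I)$, after which $(a-\varepsilon)_+\precsim p$ places $(a-\varepsilon)_+$ in the ideal generated by $\mathcal{P}(I)$, and $\varepsilon\to 0$ finishes) is exactly the mechanism the paper invokes in its displays \eqref{2.8} and \eqref{2.9}, merely spelled out in more detail.
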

\begin{proof}
\indent (i) $\Rightarrow$ (ii). It follows from the
proof of Theorem \ref{thm:weakRiesz}.

\indent (ii) $\Rightarrow$ (iii). Let $a$ be an arbitrary element of
$A^{+}$. Then choose $a_{i} = b_{i} = a$, $1 \leq i \leq 2$ and use
(ii).

\indent (iii) $\Rightarrow$ (i). Let $I$ be an ideal of $A$, $a \in I
\cap A^{+}$ and $\varepsilon > 0$. Then, by (iii), there exist $n \in
\mathbb{N}$, $p_{1}, p_{2}, \ldots, p_{n} \in \mathcal {P} (A)$ and there
exists $m \in \mathbb{N}$ such that:
\begin{equation}
\langle (a - \varepsilon)_{+}\rangle \leq \langle \oplus^{n}_{k = 1}
p_{k} \rangle \leq m\langle a \rangle\label{2.7}
\end{equation}
\indent Increasing $m$ and then $n$, if necessary, (defining the new
$p_{k}'s$ to be 0) we may suppose that $m = n$ in \eqref{2.7}. Working in
$M_{n}(A)$, it is easy to see that the first inequality in \eqref{2.7}
implies that:
\begin{equation}
(a - \varepsilon)_{+} \in \overline{Ap_{1}A + Ap_{2}A + \cdots +
Ap_{n}A}\label{2.8}
\end{equation}
\noindent while the second inequality in \eqref{2.7} implies immediately
that:
\begin{equation}
p_{k} \in \overline{AaA}, 1 \leq k \leq n\label{2.9}
\end{equation}
\indent Observe that \eqref{2.8} and \eqref{2.9} imply that $(a -
\varepsilon)_{+}$ belongs to the ideal of $A$ generated by
$\mathcal{P}(\overline{AaA})\subseteq \mathcal{P}(I)$. Therefore, $a
= \displaystyle \lim_{\varepsilon \rightarrow 0} (a -
\varepsilon)_{+}$ belongs to the ideal of $A$ generated by
$\mathcal{P}(I)$ and hence, because each element of $I$ is a linear
combination of four positive elements of $I$, $I$ is generated as an
ideal of $A$ by $\mathcal{P}(I)$. Since $I$ is an arbitrary ideal of
$A$, this proves that $A$ has the ideal
property.
\end{proof}
The result below gives some additional characterizations of the
ideal property in the case of stable, purely infinite
$C^*$-algebras. Two of these characterizations are in terms of the
Cuntz semigroup of the $C^*$-algebra. As already mentioned in the
introduction, a $C^*$-algebra $A$ is termed \emph{purely infinite}
if $A$ does not have non-zero abelian quotients and $a\precsim b$
whenever $a\in \overline{AbA}$ (see \cite{KirRor:pi}). Recall also
that a positive, non-zero element $a$ of a $C^*$-algebra $A$ is said
to be \emph{properly infinite} if $a \oplus a \precsim a \oplus 0$
in $M_{2}(A)$. It was shown in \cite[Theorem 4.16]{KirRor:pi} that a
$C^*$-algebra $A$ is purely infinite if and only if all non-zero
positive elements are properly infinite.

\begin{theorem}
\label{thm:2.6} Let $A$ be a purely infinite,
stable $C^*$-algebra. Then, the following are equivalent:
\begin{enumerate}[{\rm (i)}]
\item $A$ has the ideal property;
\item For every $a \in A^{+}$, there exists a sequence $\{p_{n}\}$ of
projections in $A$ such that
$\langle a \rangle = \displaystyle \sup_{n \in \mathbb{N}} \langle p_{n}
\rangle$ (in $W(A)$);
\item For every $a \in A^{+}$, there exists a sequence
$\{q_{n}\}$ of projections in $A$ such that $\{\langle
q_{n} \rangle\}$ is increasing in $W(A)$ and
$\langle a \rangle = \displaystyle \sup_{n \in \mathbb{N}} \langle q_{n}
\rangle$ (in $W(A)$);
\item For all $a \in A^{+},$ we have that
$\overline{AaA} = \overline{\cup_{n \geq 1}I_{n}},$ where $\{I_{n}\}$
is an increasing sequence of ideals of
$A$ and each $I_{n}$ is generated (as an ideal) by a
single projection.
\end{enumerate}
\end{theorem}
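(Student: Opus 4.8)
The plan is to prove the four conditions equivalent by the cycle (i) $\Rightarrow$ (ii) $\Rightarrow$ (iii) $\Rightarrow$ (iv) $\Rightarrow$ (i), using two structural consequences of the hypotheses throughout. Since $A$ is stable, $A\otimes\mathcal K\cong A$, so $W(A)$ is order isomorphic to $Cu(A)=W(A\otimes\mathcal K)$; hence increasing sequences in $W(A)$ have suprema and $\langle a\rangle=\sup_n\langle(a-1/n)_+\rangle$ for every $a\in A^+$, and moreover any finite direct sum of projections of $A$ is Cuntz equivalent to a single projection of $A$. Since $A$ is purely infinite, by \cite[Theorem 4.16]{KirRor:pi} every non-zero $a\in A^+$ is properly infinite, so $n\langle a\rangle=\langle a\rangle$ for all $n\geq 1$, and $a\precsim b$ whenever $a\in\overline{AbA}$. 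The case $a=0$ is trivial everywhere, so I assume $a\neq 0$.

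The two ``arithmetic'' arrows come first. For (i) $\Rightarrow$ (ii), fix $a\in A^+$ and put $I=\overline{AaA}$, which has $I$ generated by $\mathcal P(I)$ by the ideal property. Lemma \ref{lem:2.4}(i) gives, for each $n$, a projection $p_n$ that is a finite direct sum of projections of $I$ with $(a-1/n)_+\precsim p_n$; each summand lies in $\overline{AaA}$, hence is $\precsim a$ by pure infiniteness, and proper infiniteness of $a$ collapses the sum so that $\langle p_n\rangle\leq\langle a\rangle$. Realizing $p_n$ in $A$ by stability, the squeeze $\langle(a-1/n)_+\rangle\leq\langle p_n\rangle\leq\langle a\rangle$ together with $\sup_n\langle(a-1/n)_+\rangle=\langle a\rangle$ yields $\sup_n\langle p_n\rangle=\langle a\rangle$. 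For (ii) $\Rightarrow$ (iii), note each $\langle p_n\rangle\leq\langle a\rangle$ and set $q_n=p_1\oplus\cdots\oplus p_n$ (a projection of $A$ by stability); then $\{\langle q_n\rangle\}$ is increasing, $\langle q_n\rangle=\sum_{j\leq n}\langle p_j\rangle\leq n\langle a\rangle=\langle a\rangle$ by proper infiniteness, and $\langle q_n\rangle\geq\langle p_n\rangle$ forces $\sup_n\langle q_n\rangle=\langle a\rangle$.

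The ``analytic'' arrows carry the real content. For (iii) $\Rightarrow$ (iv), set $I_n=\overline{Aq_nA}$, each generated by the single projection $q_n$; increasingness of $\{\langle q_n\rangle\}$ gives $q_n\precsim q_{n+1}$, so $I_n\subseteq I_{n+1}$, while $\langle q_n\rangle\leq\langle a\rangle$ gives $q_n\in\overline{AaA}$, whence $\overline{\cup_n I_n}\subseteq\overline{AaA}$. For the reverse inclusion, $\langle(a-1/n)_+\rangle\ll\langle a\rangle=\sup_m\langle q_m\rangle$ produces an $m$ with $(a-1/n)_+\precsim q_m$, so $(a-1/n)_+\in I_m$; letting $n\to\infty$ gives $a\in\overline{\cup_m I_m}$ and hence $\overline{AaA}\subseteq\overline{\cup_m I_m}$. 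For (iv) $\Rightarrow$ (i), I verify condition (iii) of Theorem \ref{thm:2.5} and invoke that theorem. Write $I_n=\overline{Ar_nA}$ for projections $r_n$. Given $a\in A^+$ and $\varepsilon>0$, since $a\in\overline{AaA}=\overline{\cup_n I_n}$ I choose $b\in I_N^+$ with $\|a-b\|<\varepsilon$; by R{\o}rdam's approximation lemma $(a-\varepsilon)_+\precsim b$, and $b\in I_N=\overline{Ar_NA}$ gives $b\precsim r_N$ by pure infiniteness, so $(a-\varepsilon)_+\precsim r_N$. As $r_N\in\overline{AaA}$ we also have $r_N\precsim a$, so $p=r_N$ satisfies $\langle(a-\varepsilon)_+\rangle\leq\langle p\rangle\leq\langle a\rangle$, which is Theorem \ref{thm:2.5}(iii) with $m=1$; therefore $A$ has the ideal property.

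The main obstacle is concentrated in the passages between Cuntz comparison and ideal membership: in (iii) $\Rightarrow$ (iv) one must use the way-below relation $\ll$ to convert a supremum statement into membership in a single $I_m$, and in (iv) $\Rightarrow$ (i) an $\varepsilon$-approximation inside $\overline{\cup_n I_n}$ followed by R{\o}rdam's lemma is needed to manufacture the projection demanded by Theorem \ref{thm:2.5}(iii). The remaining steps are bookkeeping, but they rest essentially on having $W(A)\cong Cu(A)$ (to guarantee the suprema) and on proper infiniteness (to absorb finite direct sums into $\langle a\rangle$), so the hypotheses that $A$ be stable and purely infinite are each used in an essential way.
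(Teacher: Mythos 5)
Your proposal is correct, and most of it coincides with the paper's proof up to a re-routing of the cycle. The paper proves (i) $\Rightarrow$ (iii) $\Rightarrow$ (ii) $\Rightarrow$ (iv) $\Rightarrow$ (i): it gets (iii) directly from Proposition \ref{prop:2.7}(ii), whose interlacing identity $(a-\varepsilon_n)_+ = \bigl((a-\varepsilon_{n+1})_+ - (\varepsilon_n-\varepsilon_{n+1})\bigr)_+$ lets Theorem \ref{thm:2.5} squeeze a projection between consecutive cut-downs, so the sequence is increasing by construction, and then (iii) $\Rightarrow$ (ii) is trivial. You instead prove (i) $\Rightarrow$ (ii) first (essentially Proposition \ref{prop:2.7}(i), obtained from Lemma \ref{lem:2.4}(i) plus pure and proper infiniteness) and restore monotonicity in (ii) $\Rightarrow$ (iii) by orthogonalizing via stability and summing --- but that is exactly the device the paper uses inside its (ii) $\Rightarrow$ (iv) step, so this is a relocation rather than a new idea; likewise your (iii) $\Rightarrow$ (iv) is the second half of the paper's (ii) $\Rightarrow$ (iv) verbatim (the $\ll$ argument in $Cu(A)=W(A)$, legitimate in your version because (iii) already hands you an increasing sequence with supremum $\langle a\rangle$). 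The one genuine divergence is (iv) $\Rightarrow$ (i). The paper argues purely ideal-theoretically: each $I_n$ is generated by a projection lying in $\mathcal{P}(\overline{AaA})\subseteq\mathcal{P}(I)$, so $a\in\overline{AaA}=\overline{\cup_n I_n}$ belongs to the ideal generated by $\mathcal{P}(I)$; this uses neither pure infiniteness nor Theorem \ref{thm:2.5}, and in fact shows (iv) $\Rightarrow$ (i) for arbitrary $C^*$-algebras. You instead verify condition (iii) of Theorem \ref{thm:2.5}, paying with two applications of pure infiniteness ($b\precsim r_N$ and $r_N\precsim a$) and an approximation step; that step is fine but deserves one extra line: pick a self-adjoint $c\in I_N$ with $\|a-c\|<\varepsilon/2$, note $c\geq a-\varepsilon/2\geq -\varepsilon/2$ gives $\|c_-\|\leq\varepsilon/2$, and set $b:=c_+\in I_N^+$ (each $I_n$ is a closed ideal, hence a $C^*$-subalgebra closed under functional calculus), so $\|a-b\|<\varepsilon$ and R{\o}rdam's lemma yields $(a-\varepsilon)_+\precsim b$. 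In summary: your route buys economy of tools by recycling Theorem \ref{thm:2.5}, at the cost of invoking the purely infinite hypothesis where the paper's shorter (iv) $\Rightarrow$ (i) shows it is not needed.
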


The proof of the above theorem will use the following:

\begin{proposition}
\label{prop:2.7} Let $A$ be a $C^*$-algebra with the ideal property and let $a
\in A^{+}$.
\begin{enumerate}[{\rm (i)}]
\item If a is either properly infinite or zero,
then there is a sequence $\{p_{n}\}$ of projections in
$M_{\infty}(A)$ such that $\langle a \rangle = \displaystyle
\sup_{n \in \mathbb{N}}
\langle p_{n} \rangle$ (in $W(A)$).
\item If $(a -\varepsilon)_{+}$ is either properly
infinite or zero for every $\varepsilon > 0$, then there is a
sequence $\{q_{n}\}$ of projections in $M_{\infty}(A)$
such that $\{\langle q_{n}\rangle\}$ is an increasing sequence
in $W(A)$ and $\langle a \rangle = \displaystyle \sup_{n \in
\mathbb{N}} \langle q_{n}
\rangle$  (in $W(A)$).
\end{enumerate}
\end{proposition}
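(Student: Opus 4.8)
The plan is to derive both parts from the characterization of the ideal property furnished by Theorem \ref{thm:2.5}(iii), combined with two elementary observations. The first is that if $a$ is properly infinite then $m\langle a\rangle\leq\langle a\rangle$ for every $m\in\mathbb{N}$: indeed $a\oplus a\precsim a\oplus 0$ reads $2\langle a\rangle\leq\langle a\rangle$, and an easy induction using compatibility of $\leq$ with addition in $W(A)$ gives $m\langle a\rangle\leq\langle a\rangle$. The second is that $\langle a\rangle=\sup_n\langle(a-1/n)_+\rangle$ holds as a least upper bound in $W(A)$; the upper-bound direction is clear since $(a-1/n)_+\precsim a$, while the least-upper-bound direction is the standard fact that $(a-1/n)_+\precsim b$ for all $n$ forces $a\precsim b$ (approximate $a$ in norm by the elements implementing $(a-1/n)_+\precsim b$; see \cite{Ror:uhfII}).

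For part (i), the case $a=0$ is trivial (take $p_n=0$), so I assume $a$ is properly infinite. For each $n$ I apply Theorem \ref{thm:2.5}(iii) to $a$ with $\varepsilon=1/n$: this yields a projection $p_n\in\mathcal{P}(M_\infty(A))$ (a finite direct sum of projections of $A$) and $m_n\in\mathbb{N}$ with $\langle(a-1/n)_+\rangle\leq\langle p_n\rangle\leq m_n\langle a\rangle$. Proper infiniteness gives $m_n\langle a\rangle\leq\langle a\rangle$, so $\langle(a-1/n)_+\rangle\leq\langle p_n\rangle\leq\langle a\rangle$. The right inequality shows $\langle a\rangle$ bounds every $\langle p_n\rangle$; conversely any upper bound $x$ of $\{\langle p_n\rangle\}$ dominates every $\langle(a-1/n)_+\rangle$ and hence $\langle a\rangle$ by the supremum fact. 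Thus $\langle a\rangle=\sup_n\langle p_n\rangle$.

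For part (ii) the extra work is to force the projection classes to increase, and the device is to apply Theorem \ref{thm:2.5}(iii) not to $a$ but to the truncation $b_n:=(a-\tfrac{1}{n+1})_+$, with the cut-off $\varepsilon_n:=\tfrac1n-\tfrac1{n+1}=\tfrac1{n(n+1)}$ chosen so that the identity $\big((a-s)_+-t\big)_+=(a-s-t)_+$ gives $(b_n-\varepsilon_n)_+=(a-\tfrac1n)_+$ exactly. Theorem \ref{thm:2.5}(iii) then produces a projection $q_n\in\mathcal{P}(M_\infty(A))$ and $m_n\in\mathbb{N}$ with $\langle(a-\tfrac1n)_+\rangle\leq\langle q_n\rangle\leq m_n\langle b_n\rangle$, and since $b_n=(a-\tfrac1{n+1})_+$ is properly infinite or zero we have $m_n\langle b_n\rangle\leq\langle b_n\rangle=\langle(a-\tfrac1{n+1})_+\rangle$. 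Hence $\langle(a-\tfrac1n)_+\rangle\leq\langle q_n\rangle\leq\langle(a-\tfrac1{n+1})_+\rangle$, from which $\langle q_n\rangle\leq\langle(a-\tfrac1{n+1})_+\rangle\leq\langle q_{n+1}\rangle$, so $\{\langle q_n\rangle\}$ is increasing, and the same sandwiching as in part (i) gives $\langle a\rangle=\sup_n\langle q_n\rangle$.

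The routine steps are the applications of Theorem \ref{thm:2.5}(iii) and the arithmetic of the cut-offs; the delicate point is that the bound $m_n\langle b_n\rangle$ produced by the ideal property carries an uncontrolled multiplicity $m_n$, and the argument succeeds only because proper infiniteness of $a$ (in (i)) and of the truncations $(a-\varepsilon)_+$ (in (ii)) absorbs that multiplicity back down to a single copy. I expect the main obstacle to be packaging this absorption together with the telescoping choice $\varepsilon_n=\tfrac1{n(n+1)}$ so that consecutive projection classes interleave with the truncations $\langle(a-\tfrac1n)_+\rangle$ and thereby become genuinely increasing rather than merely cofinal.
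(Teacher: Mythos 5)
Your proposal is correct and follows essentially the same route as the paper: part (i) applies Theorem \ref{thm:2.5}(iii) to $a$ with cut-offs tending to $0$ and uses proper infiniteness to absorb the multiplicity $m$ via $m\langle a\rangle=\langle a\rangle$, and part (ii) uses exactly the paper's device of applying the theorem to the truncation $(a-\varepsilon_{n+1})_+$ with cut-off $\varepsilon_n-\varepsilon_{n+1}$, exploiting $((a-\varepsilon_{n+1})_+-(\varepsilon_n-\varepsilon_{n+1}))_+=(a-\varepsilon_n)_+$ to interleave the projection classes between consecutive truncations. The only differences are cosmetic (the concrete sequence $\varepsilon_n=1/n$ instead of an arbitrary strictly decreasing null sequence, and your explicit citation of the standard fact that $\langle a\rangle=\sup_n\langle(a-1/n)_+\rangle$, which the paper uses implicitly).
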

\begin{proof} (i). Let $\{\varepsilon_{n}\}$ be a strictly
decreasing sequence of strictly positive numbers such that
$\displaystyle \lim_{n \rightarrow \infty} \varepsilon_{n} = 0$.
Since $A$ has the ideal property, Theorem \ref{thm:2.5} implies that there
exists a sequence $\{p_{n}\}$ of projections of $M_{\infty}(A)$ such
that:
\begin{equation}
\langle(a - \varepsilon_{n})_{+}\rangle \leq \langle p_{n}\rangle
 \leq \langle a
\rangle, \text{ for all } n \in \mathbb{N}\label{2.10}
\end{equation}
\noindent (we also used the fact that $m\langle a \rangle = \langle
a \rangle$, for every $m \in \mathbb{N}$, since {\it a} is either
properly infinite or zero). Let us prove now that $\displaystyle
\sup_{n \in \mathbb{N}} \langle p_{n} \rangle = \langle a \rangle$.
For this, observe first that by \eqref{2.10} we have $\langle p_{n}
\rangle \leq \langle a \rangle$ for every $n \in \mathbb{N}$. Now,
let $x \in W(A)$ be such that $\langle p_{n} \rangle \leq x$, for
every $n \in \mathbb{N}$. Then, \eqref{2.10} implies that $\langle (a -
\varepsilon_{n})_{+}\rangle \leq x$, for every $n \in \mathbb{N}$.
Therefore, since also $\{\varepsilon_{n}\}$ is a strictly decreasing
sequence of strictly positive numbers and $\displaystyle \lim_{n
\rightarrow \infty} \varepsilon_{n} = 0$, we have:
\[
\langle a \rangle = \displaystyle \sup_{n \in \mathbb{N}} \langle (a
- \varepsilon_{n})_{+} \rangle \leq x\,.
\]
%\noindent (see, e.g., \cite[Lemma 4.36]{AraPerTom:survey}).
This
ends the proof of the
equality $\displaystyle \sup_{n \in \mathbb{N}} \langle p_{n} \rangle = \langle
a \rangle$.

\indent (ii). Let $\{\varepsilon_{n}\}$ be a strictly decreasing
sequence of strictly positive numbers such that $\displaystyle \lim_{n
\rightarrow \infty} \varepsilon_{n} = 0$. Then, since $\varepsilon_{n} - \varepsilon_{n
+ 1} > 0$ and
\[
(a - \varepsilon_{n})_{+} = ((a - \varepsilon_{n +
1})_{+} - (\varepsilon_{n} - \varepsilon_{n + 1}))_{+}
\]
for every $n \in \mathbb{N}$, Theorem \ref{thm:2.5} implies that
there exists a sequence $\{q_{n}\}$ of projections of
$M_{\infty}(A)$ such that:
\begin{equation}
\langle(a - \varepsilon_{n})_{+}\rangle \leq \langle q_{n}\rangle
\leq \langle(a - \varepsilon_{n + 1})_{+}\rangle \leq \langle a
\rangle, \text{ for all } n \in \mathbb{N}\label{2.11}
\end{equation}
\noindent (we also used the fact that $m\langle (a - \varepsilon_{n
+ 1}) \rangle = \langle (a - \varepsilon_{n + 1}) \rangle$, for
every $m, n \in \mathbb{N}$, since, by hypothesis, $(a -
\varepsilon_{n + 1})_{+}$ is either properly infinite or zero for
all $n \in \mathbb{N}$). Clearly, \eqref{2.11} implies that $\{\langle
q_{n} \rangle\}$ is an increasing sequence in $W(A)$. Finally, let
us observe that $\displaystyle \sup_{n \in \mathbb{N}} \langle q_{n}
\rangle = \langle a \rangle$ in $W(A)$, since its proof is similar
with that of the fact that $\displaystyle \sup_{n \in \mathbb{N}}
\langle p_{n} \rangle = \langle a \rangle$ in $W(A)$ in the above
proof of part (i) of this proposition.
\end{proof}

\begin{corollary}
\label{cor:2.8}
Let $A$ be a purely
infinite $C^*$-algebra with the ideal property and let $a \in
A^{+}$.  Then, there exists a sequence $\{p_{n}\}$ of
projections in $M_{\infty}(A)$ such that $\{\langle p_{n} \rangle\}$  is
an increasing sequence in $W(A)$ and $\langle a \rangle =
\displaystyle \sup_{n \in \mathbb{N}} \langle p_{n} \rangle$ (in $W(A)$).
\end{corollary}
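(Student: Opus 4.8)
The plan is to deduce Corollary \ref{cor:2.8} directly from Proposition \ref{prop:2.7}(ii). The only thing I need to check is that the hypothesis of part (ii) is automatically satisfied in the purely infinite setting. So my first step is to observe that if $A$ is purely infinite and $a\in A^+$ is arbitrary, then for every $\varepsilon>0$ the element $(a-\varepsilon)_+$ is either zero or properly infinite. This is exactly what the quoted result \cite[Theorem 4.16]{KirRor:pi} gives: in a purely infinite $C^*$-algebra \emph{all} non-zero positive elements are properly infinite. In particular $(a-\varepsilon)_+$, being a non-zero positive element (when it is non-zero), is properly infinite.

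With that observation in hand, the hypothesis of Proposition \ref{prop:2.7}(ii) holds verbatim for our $a$. Since $A$ also has the ideal property by assumption, I can apply part (ii) of the proposition directly: it produces a sequence $\{p_n\}$ of projections in $M_\infty(A)$ such that $\{\langle p_n\rangle\}$ is increasing in $W(A)$ and $\langle a\rangle = \sup_{n\in\mathbb{N}}\langle p_n\rangle$. That is precisely the conclusion of the corollary, so the proof is essentially a one-line reduction once the properly-infinite hypothesis is verified.

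I do not anticipate any genuine obstacle here, as this is a corollary whose entire content is packaging an earlier proposition under the additional purely-infinite hypothesis; the "hard part," such as it is, is simply remembering to invoke the \cite[Theorem 4.16]{KirRor:pi} characterization that converts "purely infinite" into "every non-zero positive element is properly infinite," which is exactly the form in which part (ii) wants its hypothesis. One small point worth mentioning is that the corollary asserts $p_n\in M_\infty(A)$, matching the conclusion of the proposition, and makes no claim that the $p_n$ lie in $A$ itself; so there is no stability argument needed here (that refinement, moving from $M_\infty(A)$ to $A$, is what Theorem \ref{thm:2.6} later handles using stability of $A$). Thus the proof reads: by \cite[Theorem 4.16]{KirRor:pi}, $(a-\varepsilon)_+$ is properly infinite or zero for every $\varepsilon>0$; now apply Proposition \ref{prop:2.7}(ii).
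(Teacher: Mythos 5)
Your proposal is correct and matches the paper's own proof exactly: both invoke \cite[Theorem 4.16]{KirRor:pi} to conclude that every $(a-\varepsilon)_+$ is properly infinite or zero, and then apply Proposition \ref{prop:2.7}(ii). Your side remark about the projections lying in $M_\infty(A)$ rather than $A$ (with stability only entering later in Theorem \ref{thm:2.6}) is also accurate.
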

\begin{proof}
Since $A$ is purely infinite, \cite[Theorem
4.16]{KirRor:pi} implies that every positive element of $A$ is
either properly infinite or zero. The proof ends now applying condition (ii) in
Proposition \ref{prop:2.7}.
\end{proof}

\indent{\it Proof of Theorem \ref{thm:2.6}} (i) $\Rightarrow$ (iii). It follows
immediately from Corollary \ref{cor:2.8}.\\
\indent (iii) $\Rightarrow$ (ii). The proof of this implication is trivial.\\
\indent (ii) $\Rightarrow$ (iv). Assume (ii). Let $a \in A^{+}$. Then,
by (ii), there exists a sequence $\{p_{n}\}$ of projections of $A$
such that:
\begin{equation}
\langle a \rangle = \displaystyle \sup_{n \in \mathbb{N}} \langle
p_{n} \rangle\label{2.12}
\end{equation}
\indent Since $A$ is stable, replacing each $p_{n}$ by a projection
in its Murray-von Neumann equivalence class we may suppose that
$p_{m}p_{n} = 0$ for every $m \neq n, m, n \in \mathbb{N}$. Then, for every
$n \in \mathbb{N}$ we have that $p_{1} + p_{2} + \cdots + p_{n} \in
\mathcal{P}(A)$ and $p_{n} \leq p_{1} + p_{2} + \cdots + p_{n}$ and
hence, using \cite[Lemma 2.8(iii) and Lemma 2.9]{KirRor:pi} we have:
\begin{equation}
\langle p_{n} \rangle \leq \langle p_{1} + p_{2} + \cdots + p_{n}
\rangle = \sum^{n}_{i = 1} \langle p_{k} \rangle \leq n\langle a
\rangle = \langle a \rangle, \text{ for all } n \in \mathbb{N}\label{2.13}
\end{equation}
\noindent since $\langle p_{k} \rangle \leq \langle a \rangle$ for
all $k \in \mathbb{N}$ and $a$ is properly infinite or zero (by
\cite[Theorem 4.16]{KirRor:pi}). Define:
\[
q_{n} := p_{1} + p_{2} + \cdots + p_{n} (\in \mathcal{P}(A)),
\text{ for all } n \in \mathbb{N}
\]
\indent Since $p_{1} + p_{2} + \cdots + p_{n} \leq p_{1} + p_{2} +
\cdots + p_{n + 1}$, for all $n \in \mathbb{N}$, it follows that $\{\langle
q_{n} \rangle\}$ is an increasing sequence in $W(A)$. Also, \eqref{2.12}
and \eqref{2.13} imply that\
\begin{equation}
\langle a \rangle = \displaystyle \sup_{n \in \mathbb{N}} \langle
q_{n}\rangle\label{2.14}
\end{equation}
\indent We want to prove that:
\begin{equation}
\overline{AaA} = \overline{{\cup}_{n \geq 1} {Aq_{n}A}} (=
\overline{{\cup}_{n \geq 1}{\overline{Aq_{n}A}}})\label{2.15}
\end{equation}
\noindent (note that $\langle q_{n} \rangle  \leq \langle q_{n + 1}
\rangle $ implies that $\overline{Aq_{n}A} \subseteq \overline{Aq_{n + 1}A}$,
for all $n \in \mathbb{N}$). Indeed, observe that \eqref{2.14} implies that
$\langle q_{n} \rangle \leq \langle a \rangle$, for all $n \in \mathbb{N}$,
from which we get $q_{n} \in \overline{AaA}$ for all $n \in \mathbb{N}$,
and hence:
\begin{equation}
\overline{{\cup}_{n \geq 1} {Aq_{n}A}} \subseteq
\overline{AaA}\label{2.16}
\end{equation}
\indent Let $\varepsilon > 0$. Since $\langle(a -
\varepsilon)_{+}\rangle \ll \langle a \rangle$ in $Cu(A) = W(A)$
(take into account the fact that $A$ is stable and see
\cite{CowEllIva:Cuntz}), from (2.14) we deduce that:
\[
\langle(a - \varepsilon)_{+} \rangle \leq  \langle q_{m}
\rangle
\]
for some $m \in \mathbb{N}$, which implies that $(a
-\varepsilon)_{+} \in \overline{Aq_{m}A} \subseteq
\overline{{\cup}_{n \geq 1} {Aq_{n}A}}$. Since $\varepsilon > 0$ is
arbitrary, we have that $a = \displaystyle \lim_{\varepsilon
\rightarrow 0}(a -\varepsilon)_{+} \in \overline{{\cup}_{n \geq 1}
{Aq_{n}A}}$, and therefore:
\begin{equation}
\overline{AaA} \subseteq \overline{{\cup}_{n \geq 1} {Aq_{n}A}}(=
\overline{{\cup}_{n \geq 1}{\overline{Aq_{n}A}}})
\label{2.17}
\end{equation}
\indent Observe that \eqref{2.16} and \eqref{2.17} prove the equality \eqref{2.15}.
Hence, if we define $I_{n} := \overline{Aq_{n}A}$ for every $n \in
\mathbb{N}$, we have that $\overline{AaA} = \overline{\cup_{n \geq 1}I_{n}}$,
where $\{I_{n}\}$ is an increasing sequence of ideals of $A$ and
each $I_{n}$ is generated (as an ideal) by a single projection
(namely $q_{n}$).\\
\indent (iv) $\Rightarrow$ (i). Assume (iv). Let $I$ be an ideal of
$A$ and let $a \in I^{+}$. Then, by (iv) we have that $\overline{AaA}
= \overline{\cup_{n \geq 1}I_{n}},$ where $\{I_{n}\}$ is an increasing
sequence of ideals of $A$ and each $I_{n}$ is generated (as an
ideal) by a single projection. This implies that $a$ belongs to the
ideal of $A$ generated by $\mathcal{P}(\overline{AaA}) \subseteq
\mathcal{P}(I)$, and hence $a$ belongs to the ideal of $A$ generated
by $\mathcal{P}(I)$. Since $a \in I^{+}$ was arbitrary and each
element of $I$ is a linear combination of four positive elements of
$I$, it follows that $I$ is  generated (as an ideal) by
$\mathcal{P}(I)$. But $I$ was an arbitrary ideal of $A$, so we
deduce that $A$ has the ideal property.\qed

\begin{remark}
\begin{enumerate}[{\rm (i)}]
\item {\rm Note that if $A$ is a purely infinite
$C^*$-algebra, then $W(A)$ has {\it the Riesz interpolation
property}. The same conclusion holds for the semigroup $V(A)$ consisting of the Murray-von Neumann equivalence classes $[p]$ of projections in $M_{\infty}(A)$. Indeed, let $a_{i}, b_{i} \in M_{\infty}(A)^{+}$ be such that $\langle a_{i} \rangle \leq \langle b_{j} \rangle, 1 \leq i, j \leq 2$ (in $W(A)$). We may assume that $a_{i}, b_{i} \in A^{+}, 1 \leq i \leq 2$.
Then, for all $i$, $j$
\[
\langle a_i\rangle\leq \langle a_1+a_2\rangle\leq\langle a_1\rangle+\langle a_2\rangle\leq 2\langle b_j\rangle\leq\langle b_j\rangle\,.
\]
(We have used here that every non-zero positive element is properly infinite.)}

\item {\rm For a $C^*$-algebra $A$, denote by
\[
W_{pi}(A)=\{\langle a\rangle\in W(A)\mid a=0\text{ or else properly infinite in }M_{\infty}(A)\}\,.
\]
Then the same argument as in (i) shows that $W_{pi}(A)$ is a subsemigroup of $W(A)$ with Riesz interpolation. With this language, \cite[Theorem 4.16]{KirRor:pi} can be rephrased by saying that $A$ is purely infinite if and only if $W(A)=W_{pi}(A)$.}
\end{enumerate}
\end{remark}

\section{Comparison of positive elements and the ideal property}
\indent We will introduce a notion of comparison of positive
elements for unital $C^*$-algebras that have (normalized)
quasitraces. We will prove that large classes of $C^*$-algebras with
the ideal property have this comparison property.\\
\indent We begin recalling some definitions, notation and results.
The notion of {\it dimension function} was introduced by Cuntz in
\cite{Cuntz:dimension}. A dimension function on a $C^*$-algebra $A$
is an additive order preserving function $d : W(A) \rightarrow [0,
\infty]$. We can also regard $d$ as a function $M_{\infty}(A)^{+}
\rightarrow [0, \infty]$ that respects the rules $d(a \oplus b) =
d(a) + d(b)$ and $a \precsim b \Rightarrow d(a) \leq d(b)$ for all
$a, b \in M_{\infty}(A)^{+}$. The set of all dimension functions on
a $C^*$-algebra $A$ will be denoted by $DF(A)$. A dimension function
$d$ on $A$ is said to be {\it lower semicontinuous} if $d(a) =
\displaystyle \sup_{\varepsilon > 0} d((a - \varepsilon)_{+})$
for all $a \in M_{\infty}(A)^{+}$.\\
\indent Let $A$ be a unital $C^*$-algebra. A (normalized) quasitrace on $A$ ia a function
$\tau: A \rightarrow \mathbb{C}$ satisfying:\\[2ex]
\indent(i) \,\,\,$\tau (1) = 1,$\\
\indent(ii) \,\,$0 \leq \tau(xx^{*}) = \tau(x^{*}x)$, for all $x \in A$,\\
\indent(iii) \,$\tau(a + ib) = \tau(a) + i\tau(b)$, for all $a, b \in A_{sa}$,\\
\indent(iv) \,$\tau$ is linear on abelian sub-$C^*$-algebras of $A$,\\
\indent(v) \,\,\,$\tau$ extends to a function from $M_{n}(A)$ to $\mathbb{C}$ satisfying (i)-(iv).\\[2ex]
\indent The set of all (normalized) quasitraces on $A$ will be
denoted $QT(A)$. This notion was introduced in
\cite{BlaHan:quasitrace}. Given $\tau \in QT(A)$ one may define a
map $d_{\tau} : M_{\infty}(A)^{+} \rightarrow [0, \infty]$ by:
\[
d_{\tau}(a) = \lim_{n \rightarrow \infty} \tau(a^{1/n})
\]
\indent Note that in fact $d_{\tau}$ {\it takes only real values}:
$d_{\tau}(M_{\infty}(A)^{+}) \subseteq [0, \infty)$. Blackadar and
Handelman showed in \cite{BlaHan:quasitrace} that $d_{\tau}$ is a
lower semicontinuous dimension function on $A$. Note that for
all $p \in \mathcal{P}(M_{\infty}(A))$ we have that $d_{\tau}(p) = \tau(p)$.
\begin{definition}
\label{dfn:3.1}
A unital $C^*$-algebra $A$ such that $QT(A) \neq \emptyset$ is
said to have \emph{weak strict comparison} if it has the property that $a \precsim b$ whenever
$a, b \in M_{\infty}(A)^{+}$ satisfy the inequality $d(a) < d(b)$ for every
$d \in E \cup \{f \in DF(A) \smallsetminus E:f(b) = 1\}$, where $E := \{d_{\tau}:\tau \in QT(A)\}$.
\end{definition}
\begin{definition}
A unital $C^*$-algebra $A$ such that $QT(A) \neq \emptyset$ is
said to have \emph{strict comparison of projections} if $p \precsim q$ whenever
$p, q \in \mathcal{P}(M_{\infty}(A))$ satisfy the inequality $\tau(p) < \tau(q)$ for every
$\tau \in QT(A)$.
\end{definition}
\begin{theorem}
\label{thm:3.3}
Let $A$ be a unital $C^*$-algebra with the ideal property.
Assume moreover that $A$ has strict comparison of projections and that it has finitely many extremal
quasitraces. Let $a, b \in M_{\infty}(A)^{+}$ such that:
\[
d_{\tau}(a) < d_{\tau}(b), \text{ for all } \tau \in QT(A)\,.
\]
Then, for every $\varepsilon > 0$, there is $m \in \mathbb{N}$ such that
$(a - \varepsilon)_{+} \precsim b \otimes 1_{m}$.
\end{theorem}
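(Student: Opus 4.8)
The plan is to establish $(a-\varepsilon)_+\precsim b\otimes 1_m$ by sandwiching both sides between projections and then invoking the hypothesis of strict comparison of projections. First I would reduce to the case $a,b\in M_N(A)^+$ for a fixed $N$; since the ideal property, strict comparison of projections and the finiteness of the extremal quasitraces all pass from $A$ to $M_N(A)$, I may apply Theorem \ref{thm:2.5} inside $M_N(A)$. The point of Theorem \ref{thm:2.5}(iii) is precisely to bridge positive elements and projections, and the feature I would exploit is that it supplies not merely a projection dominating a prescribed tail but a two-sided estimate: applied to $a$ with parameter $\varepsilon$ it produces a projection $p$ and an integer $m_1$ with $\langle (a-\varepsilon)_+\rangle\le\langle p\rangle\le m_1\langle a\rangle$, and applied to $b$ with a parameter $\delta$ (chosen below) it produces a projection $q_0$ and an integer $m_0$ with $\langle (b-\delta)_+\rangle\le\langle q_0\rangle\le m_0\langle b\rangle$. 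In particular $(a-\varepsilon)_+\precsim p$ and $q_0\precsim b\otimes 1_{m_0}$, so it will suffice to arrange $p\precsim q_0\otimes 1_{m_1}$ and then chain the subequivalences, ending with $m=m_0m_1$.

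The quasitrace bookkeeping is where the two standing hypotheses on quasitraces enter. Let $\tau_1,\dots,\tau_r$ be the extremal quasitraces; finiteness is essential here, as it lets me set $\eta:=\min_{1\le i\le r}(d_{\tau_i}(b)-d_{\tau_i}(a))$ and conclude $\eta>0$. Because $\tau\mapsto d_\tau(x)$ is affine for each fixed $x$ and $QT(A)$ is the convex hull of $\{\tau_1,\dots,\tau_r\}$ (a compact convex set with finitely many extreme points is the convex hull of them), the strict pointwise inequality $d_\tau(a)<d_\tau(b)$ upgrades to the uniform gap $d_\tau(b)-d_\tau(a)\ge\eta$ for every $\tau\in QT(A)$. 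Using lower semicontinuity of the finitely many $d_{\tau_i}$, I would then fix a single $\delta>0$ so small that $d_\tau((b-\delta)_+)>d_\tau(b)-\eta/2\ge d_\tau(a)+\eta/2$ holds for all $\tau$; this is the $\delta$ fed into Theorem \ref{thm:2.5}(iii) above.

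It remains to verify the inequality required by strict comparison of projections, namely $\tau(p)<\tau(q_0\otimes 1_{m_1})$ for all $\tau\in QT(A)$, and this is the step I expect to be the main obstacle: the only a priori control on the dominating projection $p$ is the upper bound $\langle p\rangle\le m_1\langle a\rangle$, which carries an uncontrolled multiplicative factor $m_1$, so a naive comparison against a single copy of $q_0$ cannot work. The resolution is to match the inflation factor to that same integer. Taking $m_1$ copies, one computes $\tau(p)=d_\tau(p)\le m_1 d_\tau(a)$, while
\[
\tau(q_0\otimes 1_{m_1})=m_1\,d_\tau(q_0)\ge m_1\,d_\tau((b-\delta)_+)>m_1\bigl(d_\tau(a)+\eta/2\bigr)>m_1\,d_\tau(a)\ge\tau(p),
\]
the additive room $m_1\eta/2$ overriding the unknown size of $d_\tau(a)$ for every $\tau$. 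Strict comparison of projections then yields $p\precsim q_0\otimes 1_{m_1}$, and the chain
\[
(a-\varepsilon)_+\precsim p\precsim q_0\otimes 1_{m_1}\precsim(b\otimes 1_{m_0})\otimes 1_{m_1}=b\otimes 1_{m_0m_1}
\]
completes the proof with $m=m_0m_1$. The degenerate situations (where $a=0$, or $d_\tau(a)=0$ for some $\tau$) are harmless, since then the left-hand term vanishes or the displayed inequality only becomes easier.
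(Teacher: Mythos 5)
Your proof is correct and follows essentially the same route as the paper's: both bridge $(a-\varepsilon)_+$ and a truncation of $b$ to projections via the ideal property (your appeal to Theorem \ref{thm:2.5}(iii) is just the packaged form of the paper's Lemma \ref{lem:2.4}), both match the inflation factor $m_1$ in the quasitrace estimate $\tau(p)\leq m_1 d_\tau(a) < m_1 d_\tau((b-\delta)_+)\leq \tau(q_0\otimes 1_{m_1})$, and both use the finiteness of the extremal quasitraces to upgrade the strict inequality from $\tau_1,\dots,\tau_r$ to all of $QT(A)$ before invoking strict comparison of projections. The one difference is a minor streamlining in your favor: by decoupling the cut-down parameter for $a$ (the given $\varepsilon$) from the one for $b$ (your $\delta$, chosen from the quasitrace gap $\eta$), you obtain the conclusion for arbitrary $\varepsilon$ in a single pass, whereas the paper fixes one $\varepsilon_0$ for both elements and must then handle $0<\varepsilon<\varepsilon_0$ by rerunning the argument and $\varepsilon>\varepsilon_0$ by monotonicity of $(a-\varepsilon)_+$.
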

\begin{proof}
We may assume, without loss of generality, that $a, b \in A^{+}$.
Let $\tau_{1}, \tau_{2}, \ldots, \tau_{l}$ be the extremal
quasitraces of $A$, for some $l \in \mathbb{N}$. (Hence, each
quasitrace of $A$ is a convex combination of $\tau_i$, $1 \leq i
\leq l$.) Since $d_{\tau_{i}}$ is a lower semicontinuous dimension
function on $A$, we have that $d_{{\tau_{i}}}(b) = \displaystyle
\sup_{\delta > 0} d_{{\tau_{i}}}((b - \delta)_{+}), 1 \leq i \leq
l$. Hence, $d_{{\tau_{i}}}(a) < d_{{\tau_{i}}}(b)$ implies that
there exists $\varepsilon_{i} > 0$ such that:
\[
d_{{\tau_{i}}}(a) < d_{{\tau_{i}}}((b - \varepsilon_{i})_{+}), 1 \leq i \leq l\,.
\]
\indent Choose $\varepsilon_{0} > 0$ such that $\varepsilon_{0} \leq
\varepsilon_{i}, 1 \leq i \leq l$. Since $0 < \delta_{1} \leq
\delta_{2}$ implies $(b - \delta_{2})_{+} \leq (b - \delta_{1})_{+}$
and hence $(b - \delta_{2})_{+} \precsim (b - \delta_{1})_{+}$ from
which one obtains $d_{\tau}((b - \delta_{2})_{+}) \leq d_{\tau}((b -
\delta_{1})_{+})$, for every $\tau \in QT(A)$, the above
inequalities imply that:
\begin{equation}
d_{{\tau_{i}}}(a) < d_{{\tau_{i}}}((b - \varepsilon_{0})_{+}), 1 \leq i \leq l\,.\label{3.1}
\end{equation}
\indent On the other hand, since $A$ has the ideal property,
Lemma \ref{lem:2.4} implies that for $\varepsilon_{0} > 0$ there exist
projections $p$ and $q$ in $\mathcal{P}(M_{\infty}(A))$ such that
$p$ is a finite direct sum of projections of $\overline{AaA}$, $q$
is a finite direct sum of projections of $\overline{AbA}$ and there
is some $m \in \mathbb{N}$ such that:
\begin{equation}
(a - \varepsilon_{0})_{+} \precsim p\label{3.2}
\end{equation}
\begin{equation}
p \precsim a \otimes 1_{m}\label{3.3}
\end{equation}
\begin{equation}
(b - \varepsilon_{0})_{+} \precsim q\label{3.4}
\end{equation}
\indent Using \eqref{3.1}, \eqref{3.3} and \eqref{3.4}, we get:
\begin{eqnarray*}
\tau_{i}(p)&=&d_{{\tau_{i}}}(p) \leq d_{{\tau_{i}}}(a \otimes 1_{m}) =  md_{{\tau_{i}}}(a) < md_{{\tau_{i}}}((b - \varepsilon_{0})_{+}) \\
   \mbox{} &=& d_{{\tau_{i}}}((b - \varepsilon_{0})_{+} \otimes
1_{m}) \leq d_{{\tau_{i}}}(q \otimes 1_{m}) = \tau_{i}(q \otimes
1_{m}), 1 \leq i \leq l
\end{eqnarray*}
\noindent which obviously implies that:
\[
\tau_{i}(p) < \tau_{i}(q \otimes 1_{m}), 1 \leq i \leq l\,.
\]
\indent Multiplying each of these $l$ inequalities with appropriate
positive numbers and then summing up all the inequalities, we have
that:
\begin{equation}
\tau(p) < \tau(q \otimes 1_{m}), \text{ for all } \tau \in QT(A)\label{3.5}
\end{equation}
\noindent (of course, we used here the fact that $\tau_{1}, \tau_{2}, \ldots
, \tau_{l}$ are the extremal quasitraces of $A$).\\
\indent Since $A$ has strict comparison of projections, \eqref{3.5}
implies that:
\begin{equation}
p \precsim q \otimes 1_{m}\label{3.6}
\end{equation}
\indent Using again condition (ii) in Lemma \ref{lem:2.4} and the fact that $q$ is a
finite direct sum of projections of $\overline{AbA}$, it follows
that:
\begin{equation}
q \precsim b \otimes 1_{n}\label{3.7}
\end{equation}
\noindent for some $n \in \mathbb{N}$.\\
\indent Using \eqref{3.2}, \eqref{3.6} and \eqref{3.7} we obtain:
\[
(a - \varepsilon_{0})_{+} \precsim p \precsim q \otimes 1_{m}
\precsim b \otimes 1_{mn}
\]
\noindent which implies that:
\begin{equation}
(a - \varepsilon_{0})_{+} \precsim b \otimes 1_{mn}\,.\label{3.8}
\end{equation}
\indent Observe now that for all $0 < \varepsilon <
\varepsilon_{0}$, we have $(b - \varepsilon_{0})_{+} \leq (b -
\varepsilon)_{+}$ which implies that $(b - \varepsilon_{0})_{+}
\precsim (b - \varepsilon)_{+}$ and hence $d_{\tau}((b -
\varepsilon_{0})_{+}) \leq d_{\tau}((b - \varepsilon)_{+})$, for all
$\tau \in QT(A)$. Using this fact and \eqref{3.1}, we get:
\[
d_{\tau_{i}}(a) < d_{\tau_{i}}((b - \varepsilon)_{+}), \text{ for all } 0 <
\varepsilon < \varepsilon_{0}, 1 \leq i \leq l\nonumber
\]
\indent Working as above, we obtain that for each $0 < \varepsilon <
\varepsilon_{0}$ there exists $k = k(\varepsilon) \in \mathbb{N}$
such that:
\[
(a - \varepsilon)_{+} \precsim b \otimes 1_{k}\,.
\]
\indent Finally, for every $\varepsilon > \varepsilon_{0}$ we have
$(a - \varepsilon)_{+} \leq (a - \varepsilon_{0})_{+}$, which
implies that $(a - \varepsilon)_{+} \precsim (a -
\varepsilon_{0})_{+}$, and hence, using also \eqref{3.8}, we obtain:
\[
(a - \varepsilon)_{+} \precsim b \otimes 1_{mn}\,.
\]
\indent This ends the proof.
\end{proof}
\begin{remark}
\label{rem:comment} {\rm For all $a$ and $b\in A^+$, the conclusion
of the above result that, for every $\varepsilon>0$, there is $m$
such that $(a-\varepsilon)_+\precsim b\otimes 1_m$ can be rephrased
by saying that $a\in \overline{AbA}$, as mentioned in \cite[comment
before Lemma 4.1]{Ror:Z-absorbing}. We shall be using this below.}
\end{remark}

If $A$ is a unital $C^*$-algebra, we will denote by $T(A)$ the set of tracial states of $A$.
\begin{corollary}
\label{cor:3.4}
Let $A$ be a unital $AH$ algebra with the ideal property and
with finitely many extremal tracial states, and let $a, b \in M_{\infty}(A)^{+}$ such that
$d_{\tau}(a) < d_{\tau}(b)$, for all $\tau \in T(A)$. Then, for every $\varepsilon > 0$ there is $m \in \mathbb{N}$
such that $(a - \varepsilon)_{+} \precsim b \otimes 1_{m}$.
\end{corollary}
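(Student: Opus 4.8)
The plan is to deduce this statement directly from Theorem \ref{thm:3.3}, by checking that the extra structure carried by a unital $AH$ algebra with the ideal property is exactly enough to recover the three hypotheses of that theorem which do not appear verbatim in the corollary: strict comparison of projections, finitely many extremal \emph{quasitraces} (rather than tracial states), and the comparison inequality $d_\tau(a) < d_\tau(b)$ over all of $QT(A)$ (rather than just over $T(A)$). The first thing I would do is record that an $AH$ algebra, being an inductive limit of the subhomogeneous building blocks $P_{n,i}M_{[n,i]}(C(X_{n,i}))P_{n,i}$, is of type I at each finite stage and hence nuclear, therefore exact.

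From exactness I would invoke Haagerup's theorem that every (normalized) quasitrace on an exact unital $C^*$-algebra is a trace, so that $QT(A) = T(A)$. This single identification does most of the bookkeeping: the extremal quasitraces of $A$ coincide with its extremal tracial states, so the hypothesis that $A$ has finitely many extremal tracial states becomes the hypothesis that $A$ has finitely many extremal quasitraces required by Theorem \ref{thm:3.3}; and the assumed inequality $d_\tau(a) < d_\tau(b)$ for all $\tau \in T(A)$ is literally the inequality $d_\tau(a) < d_\tau(b)$ for all $\tau \in QT(A)$. Thus the quantitative comparison condition and the finiteness-of-extremal-points condition transfer across for free.

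The remaining input is strict comparison of projections in the sense required by Theorem \ref{thm:3.3}, i.e. that $p \precsim q$ whenever $\tau(p) < \tau(q)$ for every $\tau \in QT(A)$. Here I would appeal to the structure theory of $AH$ algebras with the ideal property (the first-named author's work, \cite{Pas:ideal_prop_shape}), by which the comparability of projections in such an algebra is governed by its tracial states; combined with $QT(A) = T(A)$ this is precisely strict comparison of projections for $A$. With the ideal property given by hypothesis, strict comparison of projections established in this way, finitely many extremal quasitraces, and the comparison inequality holding over $QT(A)$, all four hypotheses of Theorem \ref{thm:3.3} are met, and that theorem yields at once that for every $\varepsilon > 0$ there is $m \in \mathbb{N}$ with $(a-\varepsilon)_+ \precsim b \otimes 1_m$, which is the assertion.

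The step I expect to be the genuine obstacle is the verification of strict comparison of projections: this is the one place where the deep structural properties of $AH$ algebras with the ideal property (reduction theorems and the dimension-growth hypotheses feeding into \cite{Pas:ideal_prop_shape}) are actually used, rather than the soft Cuntz-semigroup manipulations of the present paper. By contrast, the passage $QT(A) = T(A)$, although it relies on a nontrivial theorem of Haagerup, is a standard and self-contained ingredient, and everything after these two facts is a routine matching of hypotheses to Theorem \ref{thm:3.3}.
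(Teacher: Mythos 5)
There is a genuine gap, and it sits exactly where you flagged the ``genuine obstacle'': your verification of strict comparison of projections for $A$ does not go through. The result you invoke from \cite{Pas:ideal_prop_shape} (Theorem 5.1(b) there) asserts that strict comparability of projections is determined by the tracial states for unital $AH$ algebras with the ideal property \emph{and with slow dimension growth}. The corollary imposes no dimension-growth hypothesis whatsoever, so the citation is not applicable to $A$ itself; worse, the conclusion can actually fail without it, since Villadsen-type simple unital $AH$ algebras (which, being simple and unital, have the ideal property) with fast dimension growth have perforated ordered $K_0$ and do not satisfy strict comparison of projections. So your plan of feeding $A$ directly into Theorem \ref{thm:3.3} breaks down at the one hypothesis that cannot be conjured from the ideal property alone. (Your other reductions---exactness of $AH$ algebras, Haagerup's theorem \cite{Haa:quasi} giving $QT(A)=T(A)$, and the resulting transfer of the finiteness and comparison hypotheses from $T(A)$ to $QT(A)$---are all fine.)

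The paper's proof is built precisely to manufacture the missing dimension-growth hypothesis: it sets $B := A \otimes U$ for a UHF algebra $U$. Tensoring with $U$ inflates the matrix sizes at each finite stage while keeping the same local spectra, so $B$ is a unital $AH$ algebra with the ideal property and with slow dimension growth in the sense of \cite{Gong:classII}; now \cite[Theorem 5.1(b)]{Pas:ideal_prop_shape} legitimately yields strict comparison of projections for $B$ (with $QT(B)=T(B)$ by Haagerup, since $B$ is exact), and since $T(U)=\{\sigma\}$ is a singleton, $T(B)=\{\tau\otimes\sigma : \tau\in T(A)\}$ still has finitely many extreme points. Applying Theorem \ref{thm:3.3} to $\widetilde a = a\otimes 1$ and $\widetilde b = b\otimes 1$ in $B$, and rephrasing via Remark \ref{rem:comment}, gives $a\otimes 1 \in \overline{B(b\otimes 1)B}$; a slice-map (Fubini) argument then descends this to $a \in \overline{AbA}$ in $A$, and \cite[Proposition 2.7(v)]{KirRor:pi} converts ideal membership back into the quantitative statement that for every $\varepsilon>0$ there is $m$ with $(a-\varepsilon)_+ \precsim b\otimes 1_m$. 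To repair your argument you would need to reproduce this detour (or some substitute supplying strict comparison of projections); as written, the direct application of Theorem \ref{thm:3.3} to $A$ is unjustified.
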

\begin{proof}
Let $U$ be an arbitrary $UHF$ algebra, that is, $U$ is the inductive limit of a
sequence $\{M_{n(k)}\}_{k}$ of matrix algebras via unital $\ast$-homomorphisms
$M_{n(k)} \rightarrow M_{n(k + 1)}$. Define $B := A \otimes U$. Then, clearly,
$B$ is a unital $AH$ algebra with the ideal property and with slow dimension growth
(in the sense of Gong (\cite{Gong:classII})). Then,
since $B$ is a unital exact $C^*$-algebra, by a theorem of Haagerup (\cite{Haa:quasi})
we have that $QT(B) = T(B)$. Hence,
by \cite[Theorem 5.1(b)]{Pas:ideal_prop_shape} it follows that $B$ has strict
comparison of projections. Let
$T(U) = \{\sigma\}$. Then, we have that $T(B) = T(A \otimes U) = \{\tau \otimes \sigma: \tau \in T(A)\}$. Therefore,
clearly, $B = A \otimes U$ has finitely many extremal tracial states, since $A$
has finitely many extremal
tracial states. We may assume, without any loss of generality, that $a, b \in A^{+}$. Define
$\widetilde{a} := a \otimes 1$, $\widetilde{b} := b \otimes 1 \in (A \otimes U)^{+} = B^{+}$. Then, for all
$\rho = \tau \otimes \sigma \in T(B) (\tau \in T(A)))$, we have by hypothesis:
\[
d_{\rho}(\widetilde{a}) = d_{\tau}(a) < d_{\tau}(b) = d_{\rho}(\widetilde{b})\,.
\]
\indent Using now Theorem \ref{thm:3.3} and Remark \ref{rem:comment} we deduce that
$a \otimes 1 = \widetilde{a} \in \overline{B\widetilde{b}B} = \overline{(A \otimes U)(b \otimes 1)(A \otimes U)}$,
from which we easily conclude (using, e.g., Fubini maps) that $a \in \overline{AbA}$, which implies the conclusion,
by \cite[Proposition 2.7(v)]{KirRor:pi}.
\end{proof}
The next two theorems are the main results of this section. Recall first that a positive ordered
abelian semigroup $W$ (in particular, the Cuntz semigroup of a $C^*$-algebra) is said to be
{\it almost unperforated} if for all $x, y \in W$ and all $m, n \in \mathbb{N}$, with $nx\leq my$ and $n > m$,
one has $x \leq y$ (see, e.g. \cite{Ror:Z-absorbing}).
\begin{theorem}
\label{thm:3.5} Let $A$ be a unital
$C^*$-algebra with the ideal property. Assume moreover that
$A$ has strict comparison of projections and  finitely many extremal quasitraces and that $W(A)$
is almost unperforated. Then $A$ has weak strict comparison.
\end{theorem}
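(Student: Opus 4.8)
The plan is to reduce everything to two ingredients already in hand: Theorem \ref{thm:3.3}, which converts a quasitrace inequality into an ideal containment, and R{\o}rdam's comparison principle for almost unperforated Cuntz semigroups \cite{Ror:Z-absorbing}. So fix $a,b\in M_\infty(A)^+$ satisfying the hypothesis of Definition \ref{dfn:3.1}, i.e.\ $d(a)<d(b)$ for every $d\in E\cup\{f\in DF(A)\smallsetminus E:\ f(b)=1\}$ with $E=\{d_\tau:\tau\in QT(A)\}$; we must produce $a\precsim b$. As in the earlier proofs, we may assume $a,b\in A^+$.

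First I would extract the two halves of the hypothesis. Restricting the quantifier to $d\in E$ gives $d_\tau(a)<d_\tau(b)$ for every $\tau\in QT(A)$. Since $A$ has the ideal property, strict comparison of projections, and only finitely many extremal quasitraces, Theorem \ref{thm:3.3} applies: for every $\varepsilon>0$ there is $m\in\mathbb N$ with $(a-\varepsilon)_+\precsim b\otimes 1_m$, and hence, by Remark \ref{rem:comment}, $a\in\overline{AbA}$. This step is essential and cannot be skipped: an obstruction of the form ``$a$ lies outside the ideal generated by $b$'' is invisible to the functionals appearing in Definition \ref{dfn:3.1}, because any dimension function supported transversally to $\overline{AbA}$ has $f(b)=0\neq 1$ and so is excluded from the quantifier. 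Thus the $E$-part of the hypothesis, through Theorem \ref{thm:3.3}, is precisely what supplies the ideal containment that the remaining functionals do not detect.

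With $a\in\overline{AbA}$ secured, I would feed the \emph{full} dimension-function inequality (the $E$-part together with the part ranging over $f\in DF(A)\smallsetminus E$ with $f(b)=1$) into the almost unperforation machinery of \cite{Ror:Z-absorbing}. Because $W(A)$ is almost unperforated, the strict inequality $d(a)<d(b)$ on all states of $W(A)$ normalized at $\langle b\rangle$ forces $\langle a\rangle\le\langle b\rangle$, that is $a\precsim b$. Concretely, the mechanism is that a \emph{uniform} strict gap $d(a)<d(b)$ yields $(k+1)\langle a\rangle\le k\langle b\rangle$ for some $k\in\mathbb N$, and then almost unperforation (with $n=k+1>m=k$ and $n\langle a\rangle\le m\langle b\rangle$) delivers $\langle a\rangle\le\langle b\rangle$. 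Since $a,b$ were arbitrary, $A$ then has weak strict comparison.

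The main obstacle is exactly the passage from the pointwise strict inequalities to the single algebraic inequality $(k+1)\langle a\rangle\le k\langle b\rangle$, which requires the gap to be uniform over all relevant normalized states, i.e.\ $\sup d(a)/d(b)<1$. The finiteness of the set of extremal quasitraces makes the $E$-contribution to this supremum a maximum over finitely many strictly-less-than-one ratios, hence itself strictly below $1$; the remaining functionals (those $f$ with $f(b)=1$) are governed by the compactness of the normalized state space of $W(A)$. Reconciling these two families while keeping the supremum strictly below $1$, and thereby justifying the existence of the single $k$, is the delicate point, and it is precisely here that R{\o}rdam's comparison result in \cite{Ror:Z-absorbing} is invoked.
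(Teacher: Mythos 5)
Your proposal is correct and follows essentially the same route as the paper, whose proof is literally the one line ``Use Theorem \ref{thm:3.3}, Remark \ref{rem:comment} and \cite[Corollary 4.7]{Ror:Z-absorbing}'': the $E$-part of the hypothesis plus Theorem \ref{thm:3.3} supplies $a\in\overline{AbA}$, and R{\o}rdam's comparison result for almost unperforated $W(A)$ then yields $a\precsim b$. Your description of the internal mechanism is accurate up to the minor caveat that the uniform-gap step $(k+1)x\le k\langle b\rangle$ is really applied to the cut-downs $x=\langle (a-\varepsilon)_+\rangle$ (which, unlike $\langle a\rangle$ itself, satisfy $x\le m\langle b\rangle$ for some $m$, as needed for the state-space/compactness argument), with $a\precsim b$ recovered in the limit $\varepsilon\to 0$ --- a detail that lives inside the cited corollary and does not affect your argument.
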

\begin{proof} Use Theorem \ref{thm:3.3}, Remark \ref{rem:comment} and
\cite[Corollary 4.7]{Ror:Z-absorbing}.
\end{proof}
\begin{theorem}
\label{thm:3.6}
Let $A$  be a unital $AH$ algebra with the ideal property and with finitely many
extremal tracial states and such  that $W(A)$ is almost unperforated. Then $A$ has weak strict comparison.
\end{theorem}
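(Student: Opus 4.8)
The plan is to follow the scheme of the proof of Theorem \ref{thm:3.5}, but to replace the appeal to Theorem \ref{thm:3.3} (which carried strict comparison of projections as a standing hypothesis) by its $AH$-algebra counterpart, Corollary \ref{cor:3.4}, whose proof builds in the required comparison of projections internally via a $UHF$-tensoring argument. First I would record that, since $A$ is a unital $AH$ algebra, it is nuclear and in particular exact; hence by Haagerup's theorem (\cite{Haa:quasi}) we have $QT(A) = T(A)$. Consequently ``finitely many extremal tracial states'' coincides with ``finitely many extremal quasitraces'', and the set $E = \{d_{\tau} : \tau \in QT(A)\}$ of Definition \ref{dfn:3.1} is indexed by $T(A)$.

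Next, let $a, b \in M_{\infty}(A)^{+}$ satisfy the hypothesis of weak strict comparison, that is, $d(a) < d(b)$ for every $d \in E \cup \{f \in DF(A) \smallsetminus E : f(b) = 1\}$. In particular $d_{\tau}(a) < d_{\tau}(b)$ for all $\tau \in QT(A) = T(A)$. Since $A$ is a unital $AH$ algebra with the ideal property and with finitely many extremal tracial states, Corollary \ref{cor:3.4} applies and yields, for every $\varepsilon > 0$, some $m \in \mathbb{N}$ with $(a - \varepsilon)_{+} \precsim b \otimes 1_{m}$; by Remark \ref{rem:comment} this is precisely the assertion that $a \in \overline{AbA}$. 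This is the step that lets me avoid assuming strict comparison of projections for $A$ outright (which could fail in the presence of dimension growth), since the $UHF$-tensoring in the proof of Corollary \ref{cor:3.4} supplies exactly the comparison that Theorem \ref{thm:3.5} would otherwise demand as a hypothesis.

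Finally, with $a$ now supported by $b$ in the sense that $a \in \overline{AbA}$, and with $W(A)$ almost unperforated by hypothesis, I would invoke \cite[Corollary 4.7]{Ror:Z-absorbing} to upgrade the dimension-function inequalities to the Cuntz subequivalence $a \precsim b$. Here the full strength of the weak strict comparison hypothesis is used: the additional dimension functions $\{f \in DF(A) \smallsetminus E : f(b) = 1\}$ are exactly the functionals on $W(A)$ that Rørdam's framework needs in order to pass from strict inequality of all relevant dimension functions to genuine comparison, once almost unperforation is available. Since $a, b$ were arbitrary, this shows that $A$ has weak strict comparison.

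The main obstacle I anticipate is this last step, namely matching the hypothesis set $E \cup \{f \in DF(A) \smallsetminus E : f(b) = 1\}$ of Definition \ref{dfn:3.1} against the precise hypotheses of \cite[Corollary 4.7]{Ror:Z-absorbing}, and verifying that $a \in \overline{AbA}$ together with almost unperforation of $W(A)$ is enough to conclude $a \precsim b$. Once that identification is in place the argument is a direct parallel of the proof of Theorem \ref{thm:3.5}, with Corollary \ref{cor:3.4} playing the role of Theorem \ref{thm:3.3}.
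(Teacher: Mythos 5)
Your proposal is correct and is essentially the paper's own argument: the published proof of Theorem \ref{thm:3.6} is exactly the one-line combination of Corollary \ref{cor:3.4}, Remark \ref{rem:comment} and \cite[Corollary 4.7]{Ror:Z-absorbing} that you describe, with Corollary \ref{cor:3.4} substituting for Theorem \ref{thm:3.3} just as in the proof of Theorem \ref{thm:3.5}. Your additional observations (exactness giving $QT(A)=T(A)$ via Haagerup, and the role of the dimension functions normalized at $b$ in matching R{\o}rdam's hypotheses) are accurate fillings-in of details the paper leaves implicit.
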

\begin{proof}
Use Corollary \ref{cor:3.4}, Remark \ref{rem:comment} and
\cite[Corollary 4.7]{Ror:Z-absorbing}.
\end{proof}
\begin{theorem}
\label{thm:3.7}
 Let $A$ be a unital $AH$ algebra with the ideal property and with finitely many extremal tracial
states and let $B$ be a unital, simple, infinite dimensional $AH$ algebra with no dimension growth and with
a unique tracial state. Then $A \otimes B$  has weak strict comparison.
\end{theorem}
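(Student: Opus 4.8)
The plan is to reduce the statement to Theorem \ref{thm:3.6} by checking that $A\otimes B$ satisfies all of its hypotheses: that $A\otimes B$ is a unital $AH$ algebra with the ideal property, that it has only finitely many extremal tracial states, and that its Cuntz semigroup $W(A\otimes B)$ is almost unperforated. Once these are established, Theorem \ref{thm:3.6} delivers weak strict comparison for $A\otimes B$ at once.

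First I would record the structural facts. Since $A$ and $B$ are both $AH$ (hence nuclear, so that $\otimes$ is unambiguous) and unital, $A\otimes B$ is again a unital $AH$ algebra: it is the inductive limit of the tensor products of the homogeneous building blocks of $A$ and $B$, and a product $PM_{k}(C(X))P\otimes QM_{l}(C(Y))Q$ is a corner $(P\otimes Q)M_{kl}(C(X\times Y))(P\otimes Q)$ of a homogeneous algebra over $X\times Y$. For the ideal property I would use that $B$ is simple and unital: the closed two-sided ideals of $A\otimes B$ are then exactly those of the form $J\otimes B$, with $J$ an ideal of $A$ (a standard fact for a simple nuclear tensor factor, since $\mathrm{Prim}(A\otimes B)\cong\mathrm{Prim}(A)$). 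Given such an ideal, the ideal property of $A$ provides that $J$ is generated by $\mathcal P(J)$; each $p\otimes 1_{B}$ (for $p\in\mathcal P(J)$) is a projection of $J\otimes B$, and the closed ideal it generates contains every $p\otimes b$, so one checks that $\{p\otimes 1_{B}:p\in\mathcal P(J)\}$ generates $J\otimes B$. Hence $A\otimes B$ has the ideal property. For the traces, exactly as in the proof of Corollary \ref{cor:3.4} (with $B$ in place of the $UHF$ algebra $U$), the uniqueness of the trace $\sigma$ on $B$ forces $T(A\otimes B)=\{\tau\otimes\sigma:\tau\in T(A)\}$, whose extreme points are in bijection with the finitely many extremal tracial states of $A$; thus $A\otimes B$ has finitely many extremal tracial states.

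The crucial, and I expect the hardest, step is to show that $W(A\otimes B)$ is almost unperforated. Here I would invoke the regularity of $B$: a simple, unital, infinite-dimensional $AH$ algebra with no dimension growth absorbs the Jiang--Su algebra $\mathcal Z$ tensorially, i.e.\ $B\cong B\otimes\mathcal Z$ (this rests on the structure theory of simple $AH$ algebras with no dimension growth, where infinite-dimensionality is what rules out the matrix-algebra case). Consequently $A\otimes B\cong (A\otimes B)\otimes\mathcal Z$ is $\mathcal Z$-stable, and by R{\o}rdam's theorem (\cite{Ror:Z-absorbing}) the Cuntz semigroup of every $\mathcal Z$-stable $C^{*}$-algebra is almost unperforated. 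This supplies the last hypothesis.

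Finally, having verified that $A\otimes B$ is a unital $AH$ algebra with the ideal property, with finitely many extremal tracial states, and with $W(A\otimes B)$ almost unperforated, I would apply Theorem \ref{thm:3.6} to conclude that $A\otimes B$ has weak strict comparison. The two points requiring the most care are the $\mathcal Z$-stability of $B$, which relies on the classification/regularity theory of simple $AH$ algebras with no dimension growth, and the identification of the ideal lattice and the tracial simplex of the tensor product, both of which hinge on the simplicity of $B$.
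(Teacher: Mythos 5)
Your proposal is correct and follows essentially the same route as the paper: reduce to Theorem \ref{thm:3.6} by establishing the ideal property of $A\otimes B$, obtaining $\mathcal{Z}$-stability of $B$ (hence of $A\otimes B$) from the no-dimension-growth hypothesis and deducing almost unperforation of $W(A\otimes B)$ via R{\o}rdam's theorem in \cite{Ror:Z-absorbing}, and identifying $T(A\otimes B)=\{\tau\otimes\sigma:\tau\in T(A)\}$ from the unique trace on $B$. The only (minor) divergence is that where you prove the ideal property by hand through the ideal-lattice description of $A\otimes B$ as $\{J\otimes B\}$ afforded by the simplicity of $B$, the paper simply cites \cite[Corollary 1.3]{PasRor:tensIP}, which yields the same conclusion for tensor products of algebras with the ideal property when one factor is exact, without using simplicity.
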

\begin{proof}
Observe first that since both $A$ and $B$ have the ideal property and $A$ (or $B$) is exact, it follows that
$A \otimes B$ has the ideal property (use, e.g., \cite[Corollary
1.3]{PasRor:tensIP}). On the other hand, by a result in
\cite{TomWin:ASH}, $B$ is $\mathcal{Z}$-stable, that is $B \cong B
\otimes \mathcal{Z}$, where $\mathcal{Z}$ is the Jiang-Su algebra
(\cite{JiaSu:Z}). Hence the unital $AH$ algebra with the ideal
property $A \otimes B$ is $\mathcal{Z}$-stable, i.e. $A \otimes B
\cong (A \otimes B) \otimes \mathcal{Z}$, and then \cite[Theorem
4.5]{Ror:Z-absorbing} implies that $W(A \otimes B)$ is almost
unperforated. Note that if $T(B) = \{\sigma\}$, then $T(A \otimes B)
= \{\tau \otimes \sigma: \tau \in T(A)\}$ and since $A$ has finitely
many extremal tracial states, it is obvious that $A \otimes B$ has
also finitely many extremal tracial states. Now,
the fact that $A \otimes B$ has weak strict comparison follows
from Theorem \ref{thm:3.6}.
\end{proof}

\begin{remark}
\label{rem:revision} {\rm  We may say that a unital C$^*$-algebra $A$ with
$QT(A)\neq\emptyset$ has \emph{almost weak strict comparison} if $A$
satisfies all the conditions in the definition of weak strict
comparison (see Definition \ref{dfn:3.1}) with the only difference
that the condition:
\[
(*)\,\, d(a) < d(b) \text{ for all } d \in E\
\]
is replaced by the new condition:
\[
(**)\,\,\text{there is } \varepsilon_{0} > 0 \text{ such that } d(a)
< d((b- \varepsilon_{0})_{+}) \text{ for all } d \in E\,,
\]
with $E$ as in Definition \ref{dfn:3.1} above (of course, we still request that $d(a) < d(b)$ for every
$d \in \{f \in DF(A) \smallsetminus E:f(b) = 1\}$).

In the proof of Theorem \ref{thm:3.3} we showed, in particular, that
in the case when a unital C$^*$-algebra $A$ has finitely many
extremal quasitraces, then $(*)\implies (**)$. Therefore, in this case, if $A$ has
almost weak strict comparison, it follows that $A$ has weak strict comparison.
Note that if we drop the condition that the C$^*$-algebra $A$ has
finitely many extremal quasitraces (tracial states), the conclusions
of Theorem \ref{thm:3.3} and of Corollary \ref{cor:3.4} remain true
if we replace in their hypotheses condition $(*)$ by condition
$(**)$ as above. Also, it is easy to see that, if in Theorems
\ref{thm:3.5}, \ref{thm:3.6}, \ref{thm:3.7} we drop the condition
that $A$ has finitely many extremal quasitraces (tracial states) and
the condition that B has a unique tracial state (in Theorem
\ref{thm:3.7}), then they remain true if we replace in their
conclusions ``weak strict comparison'' by ``almost weak strict
comparison'' (to show these results we use the same proofs). In
conclusion, we thus obtain generalizations of all the results proved
in Section 3}.
\end{remark}

\section*{Acknowledgements}

The second named author was partially supported by a MEC-DGESIC
grant (Spain) through Project MTM2008-06201-C02-01/MTM, and by the
Comissionat per Universitats i Recerca de la Generalitat de
Catalunya. Part of this work was carried out during the AIM Workshop
``The Cuntz semigroup'', held in November 2009. The authors are
grateful to AIM and its staff for their support and working
conditions provided. It is also a pleasure to thank Aaron Tikuisis
for comments on a first draft that allowed us to simplify some
arguments and led to an improved exposition.

\bibliographystyle{amsplain}
%\bibliography{operator}

\begin{thebibliography}{10}

\bibitem{AntBosaPer:compl} R.~Antoine, J.~Bosa, and F.~Perera,
\emph{Completions of monoids with applications to the Cuntz semigroup}, Internat. J.
 Math. \textbf{22 (5)} (2011), 1--25.

\bibitem{AraPerTom:survey}
P.~Ara, F.~Perera, and A.~S. Toms, \emph{{$K$-theory for operator
algebras. Classification of $C^*$-algebras}}, in ``Aspects of
Operator Algebras and Applications'' (P. Ara, F. Lled\'o, F. Perera,
eds.), Contemporary Math., \textbf{534}, Amer. Math. Soc.,
Providence, RI, 2011, pp. 1-71.

%  Preprint, Math. Archive
%  math.OA/0902.3381v1.

\bibitem{BlaHan:quasitrace}
B.~Blackadar and D.~Handelman, \emph{{Dimension functions and traces on
  $C^*$-algebras}}, J. Funct. Anal. \textbf{45} (1982), 297--340.

\bibitem{BroPed:realrank}
L.~G. Brown and G.~K. Pedersen, \emph{{$C^*$-algebras of real rank zero}}, J.
  Funct. Anal. \textbf{99} (1991), 131--149.

\bibitem{CowEllIva:Cuntz}
K.~T. Coward, G.~A. Elliott, and C.~Ivanescu, \emph{{The Cuntz semigroup as an
  invariant for $C^*$-algebras}}, J. Reine Angew. Math. \textbf{623} (2008),
  161--193.

\bibitem{Cuntz:dimension}
J.~Cuntz, \emph{{Dimension functions on simple $C^*$-algebras}}, Math. Ann.
  \textbf{233} (1978), 145--153.

\bibitem{Cuntz:KOn}
\bysame, \emph{{$K$-theory for certain $C^*$-algebras}}, Ann. of Math.
  \textbf{113} (1981), 181--197.

\bibitem{Dad:reduction}
M.~D\u{a}d\u{a}rlat, \emph{{Reduction to dimension three of local spectra of
  real rank zero $C^*$-algebras}}, J. Reine Angew. Math. \textbf{460} (1995),
  189--212.

\bibitem{Ell:classprob}
G.~A. Elliott, \emph{{The classification problem for amenable $C^*$-algebras}},
  Proceedings of the International Congress of Mathematicians (Basel), vol.
  1,2, Birkh\"auser, 1995, pp.~922--932.

\bibitem{EllToms:reg}
G.~A. Elliott and A.~S. Toms, \emph{{Regularity properties in the
  classification program for separable amenable $C^*$-algebras}}, Bull. Amer.
  Math. Soc. (N.S.) \textbf{45} (2008), 229--245.

\bibitem{Gong:classII}
G.~Gong, \emph{{On inductive limits of matrix algebras over higher dimensional
  spaces, part II}}, Math. Scand. \textbf{80} (1997), 56--100.

\bibitem{Gong:simple_AH}
\bysame, \emph{{On the classification of simple inductive limit $C^*$-algebras
  I: The reduction theorem}}, Doc. Math. \textbf{7} (2002), 255--461.

\bibitem{GonJiaLiPas:red1}
G.~Gong, C.~Jiang, L.~Li, and C.~Pasnicu, \emph{{$A \mathbb{T}$} structure of {$AH$} algebras with the ideal
  property and torsion free {$K$}-theory}, J. Funct. Anal. \textbf{258} (2010),
  2119--2143.

\bibitem{GonJiaLiPas:red2}
\bysame, \emph{{A reduction theorem for {$AH$}
  algebras with the ideal property}}, preprint.

\bibitem{Haa:quasi}
U.~Haagerup, \emph{{Every quasi-trace on an exact $C^*$-algebra is a trace}},
  preprint, 1991.

\bibitem{JiaSu:Z}
X.~Jiang and H.~Su, \emph{{On a simple unital projectionless $C^*$-algebra}},
  American J. Math. \textbf{121} (1999), no.~2, 359--413.

\bibitem{KirRor:pi}
E.~Kirchberg and M.~R{\o}rdam, \emph{{Non-simple purely infinite
  $C^*$-algebras}}, American J. Math. \textbf{122} (2000), 637--666.

\bibitem{Pas:ideal_prop_shape}
 C.~Pasnicu, \emph{{Shape equivalence, nonstable $K$-theory and $AH$ algebras}},
  Pacific J. Math. \textbf{192} (2000), no.~1, 159--182.

\bibitem{Pas:ideal_prop_AH}
\bysame, \emph{{On the $AH$ algebras with the ideal property}}, J. Operator
  Theory \textbf{43} (2000), no.~2, 389--407.


\bibitem{PasRor:tensIP}
C.~Pasnicu and M.~R{\o}rdam, \emph{Tensor products of {$C\sp \ast$}-algebras
  with the ideal property}, J. Funct. Anal. \textbf{177} (2000), no.~1,
  130--137.

\bibitem{PasRor:pirr0}
\bysame, \emph{Purely infinite {$C^*$}-algebras of real
  rank zero}, J. Reine Angew. Math. \textbf{613} (2007), 51--73.

\bibitem{per:ijm} F. Perera, \emph{The structure of positive elements for $C^*$-algebras with real rank zero}, Internat. J. Math. \textbf{8} (1997), 383--405.

\bibitem{Ror:uhfII}
M.~R{\o}rdam, \emph{{On the Structure of Simple $C^*$-algebras Tensored with a
  UHF-Algebra, II}}, J. Funct. Anal. \textbf{107} (1992), 255--269.

\bibitem{Ror:encyc}
\bysame, \emph{{Classification of Nuclear, Simple $C^*$-algebras}},
  {Classification of Nuclear $C^*$-Algebras. Entropy in Operator Algebras}
  (J.~Cuntz and V.~Jones, eds.), vol. 126, Encyclopaedia of Mathematical
  Sciences. Subseries: Operator Algebras and Non-commutative Geometry, no. VII,
  Springer Verlag, Berlin, Heidelberg, 2001, pp.~1--145.

\bibitem{Ror:simple}
\bysame, \emph{{A simple $C^*$-algebra with a finite and an infinite
  projection}}, Acta Math. \textbf{191} (2003), 109--142.

\bibitem{Ror:Z-absorbing}
\bysame, \emph{{The stable and the real rank of $\mathcal{Z}$-absorbing
  C*-algebras}}, Internat. J.\ Math. \textbf{15} (2004), no.~10,
  1065--1084.

\bibitem{Tiku:ijm} A. Tikuisis, \emph{The Cuntz semigroup of continuous functions into certain simple $C^*$-algebras}, Internat. J. Math., to appear.

\bibitem{Toms:example}
A.~S. Toms, \emph{{On the independence of $K$-theory and stable rank for simple
  $C^*$-algebras}}, J. Reine Angew. Math. \textbf{578} (2005), 185--199.

\bibitem{Toms:classproblem}
\bysame, \emph{{On the classification problem for nuclear $C^*$-algebras}},
  Ann. of Math. (2) \textbf{167} (2008), 1029--1044.

\bibitem{TomWin:ASH}
A.~S. Toms and W.~Winter, \emph{{$\mathcal{Z}$-stable $ASH$ algebras}}, Canad.
  J. Math. \textbf{60} (2008), 703--720.

\end{thebibliography}
\providecommand{\bysame}{\leavevmode\hbox to3em{\hrulefill}\thinspace}
\providecommand{\MR}{\relax\ifhmode\unskip\space\fi MR }
% \MRhref is called by the amsart/book/proc definition of \MR.
\providecommand{\MRhref}[2]{%
  \href{http://www.ams.org/mathscinet-getitem?mr=#1}{#2}
}
\providecommand{\href}[2]{#2}

\end{document}